%
%
%

%
%

%
%
%

%
%

\documentclass[reqno]{amsart} 
\usepackage{enumerate,amsmath,amssymb} 
\usepackage{pstricks,pst-node,pst-coil,pst-plot} 
\usepackage{hyperref}


\theoremstyle{plain} 
\newtheorem{step}{Step} 
\newtheorem{thm}{Theorem}[section] 
\newtheorem{theorem}[thm]{Theorem} 
\newtheorem{cor}[thm]{Corollary} 
\newtheorem{corollary}[thm]{Corollary} 
 
\newtheorem{lemma}[thm]{Lemma} 
\newtheorem{prop}[thm]{Proposition} 
\newtheorem{proposition}[thm]{Proposition} 
 
\newtheorem{question}[thm]{Question} 
\theoremstyle{remark}

\theoremstyle{definition}


\def\al{{\alpha}}

\def\De{{\Delta}}
\def\om{{\omega}}

\def\la{{\lambda}}
\let\La\Lambda

\def\si{{\sigma}}

\def\ga{{\gamma}}
\def\epsilon{{\varepsilon}}
\def\ep{{\varepsilon}}

\def\phi{{\varphi}}

\DeclareMathAlphabet{\doba}{U}{msb}{m}{n} 

\gdef\mH{\doba{H}}
\gdef\mN{\doba{N}}
\gdef\mM{\doba{M}}

\gdef\mR{\doba{R}}
\gdef\mS{\doba{S}}

\def\cF{\mathcal{F}}

\def\PSp{{\mathop{\rm PSp}}}
\def\vol{{\mathop{\rm vol}}}
\def\Scal{{s}}

\let\pa\partial 
 
\let\<\langle 
\let\>\rangle 
\def\dvert{{d_{\rm vert}}} 
\def\sh#1{\mathop{\rm sh}\nolimits_{#1}}

\let\ti\tilde
\let\ul\underline

\def\ula{\underline{\lambda}}

\newcommand{\definedas}{\mathrel{\raise.095ex\hbox{\rm :}\mkern-5.2mu=}}


%
\newcounter{mnotecount}[section]

\def\geqtwo{2+}

\begin{document} 


\title
{Square-integrability of solutions of the Yamabe equation}
 
\author{Bernd Ammann} 
\address{Fakult\"at f\"ur Mathematik \\ 
Universit\"at Regensburg \\
93040 Regensburg \\ 
Germany}
\email{bernd.ammann@mathematik.uni-regensburg.de}

\author{Mattias Dahl} 
\address{Institutionen f\"or Matematik \\
Kungliga Tekniska H\"ogskolan \\
100 44 Stockholm \\
Sweden}
\email{dahl@math.kth.se}

\author{Emmanuel Humbert} 
\address{Laboratoire de Math\'ematiques et Physique Th\'eorique \\ 
Universit\'e de Tours \\
Parc de Grandmont \\
37200 Tours - France \\}
\email{Emmanuel.Humbert@lmpt.univ-tours.fr}

\begin{abstract}
We show that solutions of the Yamabe equation on certain
$n$-dimensional non-compact Riemannian manifolds which are bounded and
$L^p$ for $p=2n/(n-2)$ are also $L^2$. This $L^p$-$L^2$-implication
provides explicit constants in the surgery-monotonicity formula for the
smooth Yamabe invariant in our article
\cite{ammann.dahl.humbert:p08a}. As an application we see that the
smooth Yamabe invariant of any $2$-connected compact $7$-dimensional
manifold is at least $74.5$. Similar conclusions follow in dimension
$8$ and in dimensions $\geq 11$.
\end{abstract}

\subjclass[2000]{35J60 (Primary), 35P30, 57R65, 58J50, 58C40 (Secondary)}
%

\date{\today}

\keywords{Yamabe invariant, surgery, $L^p$-$L^2$ implication} 

\maketitle

\tableofcontents

\section{Introduction}

The goal of this article is to show that bounded positive solutions of
the Yamabe equation on certain $n$-dimensional non-compact Riemannian
manifolds $\mM_c^{n,k}$ with finite $L^{p_n}$-norm, $p_n = 2n/(n-2)$, also
have finite $L^2$-norm. For $c\neq 0$ these spaces $\mM_c^{n,k}$ are
products of rescaled hyperbolic space with a standard sphere, while
$\mM_0^{n,k} = \mR^{k+1} \times \mS^{n-k-1}$. The integer $k$ satisfies 
$0 \leq k \leq n-3$. The goal is achieved in some cases, we then say
that the $L^{p_n}$-$L^2$-implication holds. In particular, it holds
for $0\leq k\leq n-4$. If $k=n-3$ and if $n$ is sufficiently large we will
find counterexamples, see Section~\ref{sec.counterex}. 

In the favourable cases the proof of the $L^{p_n}$-$L^2$-implication
is obtained by a combination of tricky integration and suitable
estimates, and there is not much hope to generalize our technique to a
much larger class of non-compact manifolds. The reader will thus
probably ask why we develop such estimates for these special spaces.

The reason is that these spaces appear naturally as limit spaces in 
the surgery construction of our article 
\cite{ammann.dahl.humbert:p08a}, and this limit construction also
provides $L^{p_n}$-solutions of the Yamabe equation.
The main result of \cite{ammann.dahl.humbert:p08a} is a surgery
formula for the smooth Yamabe invariant $\si(M)$, see 
Subsection~\ref{sigma.def} for the definition. The surgery formula
states that if $M$ is a compact manifold of dimension $n$ and if $N$
is obtained from~$M$ through $k$-dimensional surgery, then the smooth
Yamabe invariants~$\si(M)$ and~$\si(N)$ satisfy
\begin{equation}\label{eq.sigma.surg}
\sigma(N) \geq \min(\sigma(M), \Lambda_{n,k}),
\end{equation}
provided that $k\leq n-3$. The definition of the numbers
$\Lambda_{n,k}$ is quite involved, see Subsection~\ref{Lambda.def},
but they are proven to be positive and to depend only on $n$ and~$k$. 
The $L^{p_n}$-$L^2$-implication helps to derive explicit lower bounds
for these numbers, see Subsections~\ref{subsec.role} to \ref{Lambda.def}.

Using estimates for product manifolds from the article
\cite{ammann.dahl.humbert:p11} we obtain explicit positive lower
bounds for $\La_{n,k}$ in the case $2\leq k\leq n-4$, see
Corollary~\ref{maincor} for details. This leads directly to a uniform
positive lower bound for the smooth Yamabe invariant of 
$2$-connected manifolds that are boundaries of compact spin manifolds,
see Corollary~\ref{app.2-conn}. In dimensions 
$n\in\{5,6,7,8\}$ and $n\geq 11$ we obtain as explicit positive lower bound 
$\si(M)$, provided that $M$ is $2$-connected with vanishing index in 
$\al(M)\in KO_n(pt)$, see Corollary~\ref{app.van.ind}.
Using results from \cite{ammann.dahl.humbert:p11c:sigmaexp},
\cite{petean.ruiz:p10}, and \cite{petean.ruiz:p11} an explicit
positive lower bound can also be obtained in the case $n=4$, $k=1$ and
in the case $n=5$, $k \in \{1,2\}$.

\subsection*{Acknowledgements}

B. Ammann was partially supported by the DFG Sachbeihilfe AM 144/2-1.
M.~Dahl was partially supported by the Swedish Research Council. 
E.~Humbert was partially supported by ANR-10-BLAN 0105. 
We want to thank J. Petean for enlightening discussions relating to 
this article.

\section{Preliminaries and Notation}
\label{Section_preliminaries}

\subsection{Spaces of constant curvature and the model spaces
 $\mM_c^{n,k}$ }
\label{model}

Here we fix notation for the spaces of constant curvature and we 
define the spaces $\mM_c^{n,k}$.

We denote the Euclidean metric on~$\mR^n$ by $\xi^n$. The sphere 
$S^n \subset \mR^{n+1}$ equipped with its standard round metric $\rho^n$ 
is denoted by $\mS^n$. We set $\om_\ell \definedas \vol(\mS^\ell)$.
For $c \in \mR$ let $\mH^{k+1}_c$ be the simply connected, connected,
complete Riemannian manifold of constant sectional curvature
$-c^2$. Its metric will be denoted as $\eta_c^{k+1}$. Polar
coordinates around a point $x_0$ gives an identification of 
$\mH^{k+1}_c \setminus \{ x_0 \}$ with $(0,\infty) \times S^k$
under which
\[
\eta_c^{k+1} = dr^2 + \sh{c}(r)^2\rho^k
\]
where 
\begin{equation*}
\sh{c}(r)
\definedas
\begin{cases}
\frac{1}{c} \sinh(cr) & \text{if } c \neq 0 , \\
r & \text{if } c = 0 .
\end{cases}
\end{equation*}

We denote the product metric on~$\mH^{k+1}_c \times \mS^{n-k-1}$ by
\begin{equation*}
G_c \definedas \eta^{k+1}_c + \rho^{n-k-1}
\end{equation*}
and we define the model space $\mM_c^{n,k}$ through
\[
\mM_c^{n,k} \definedas \mH^{k+1}_c \times \mS^{n-k-1}.
\]
The scalar curvature of $\mM_c^{n,k}$ is 
$\Scal^{G_c} = - c^2 k(k+1) + (n-k-1)(n-k-2)$. Note that 
$\mM_c^{n,k}=\mM_{-c}^{n,k}$. As a consequence all following infima
over $c\in[-1,1]$ could be taken over $c\in [0,1]$. From the conformal
point of view the case $c = \pm 1$ is special since $\mM_{\pm 1}^{n,k}
= \mH_{\pm 1}^{k+1} \times \mS^{n-k-1}$ is conformal to 
$\mS^n \setminus \mS^k$, see
\cite[Proposition 3.1]{ammann.dahl.humbert:p08a}.

\subsection{The conformal Yamabe constant}

For integers $n \geq 3$ we set $a_n \definedas \frac{4(n-1)}{n-2}$ and
$p_n \definedas \frac{2n}{n-2}$. For a Riemannian manifold $(M,g)$ 
we denote the scalar curvature by $\Scal^g$, the Laplace operator
$\Delta^g$, and the volume form $dv^g$. The conformal Laplacian is
defined as $L^g \definedas a_n \Delta^g + \Scal^g$. In general the
dependence on the Riemannian metric is indicated by the metric as a
superscript. 

Let $C^\infty_c(M)$ denote the
space of compactly supported smooth functions on~$M$. For a Riemannian
manifold $(M,g)$ of dimension~$n \geq 3$ we define the Yamabe
functional by 
\begin{equation*}
\cF^g (u) 
\definedas 
\frac{
\int_M \left( a_n |du|_g^2 + \Scal^g u^2 \right) \, dv^g}
{\left( \int_M |u|^{p_n} \, dv^g \right)^{\frac{2}{p_n}}} ,
\end{equation*}
where $u \in C^\infty_c(M)$ does not vanish identically. The 
{\it conformal Yamabe constant} $\mu(M,g)$ of $(M,g)$ is defined by 
\begin{equation*}
\mu(M,g) \definedas 
\inf_{u \in C_c^{\infty}(M), u \not\equiv 0} \cF^g(u).
\end{equation*}
Here $M$ is allowed to be compact or non-compact.

If $M$ is compact, then the infimum is attained by a positive smooth
function. It thus satisfies the associated Euler-Lagrange equation,
called the \emph{Yamabe equation},
\begin{equation} \label{eq.conf.1} 
L^g u = \mu u^{p_n - 1}
\end{equation}
for a suitable constant $\mu$.

\subsection{The smooth Yamabe invariant}\label{sigma.def}

Let $M$ be a compact manifold of dimension~$n \geq 3$. The smooth
Yamabe invariant of $M$ defined as 
\begin{equation*}
\sigma(M) \definedas \sup \mu(M,g)
\end{equation*}
where the supremum is taken over the set of all Riemannian metrics
on~$M$. It is known that $\sigma(S^n) = \mu(\mS^n) = n(n-1) \om_n^{2/n}$.

\subsection{The role of the model spaces in the surgery formula}
\label{subsec.role}

To give some background and motivation we will now briefly explain 
the role of the model spaces $\mM_c^{n,k}$ in 
\cite{ammann.dahl.humbert:p08a}, and we want to give a rough 
idea why the invariants $\La_{n,k}$ defined in the following
subsections appear. The discussion in the present subsection is not
needed in the proofs of the following results. We thus try to avoid
technical details, and do not aim for logical completeness.

Assume that $(M,g)$ is a compact Riemannian manifold of dimension
$n\geq 3$ and that~$N$ is obtained by $k$-dimensional surgery from
$M$, where $0 \leq k\leq n-3$. In \cite{ammann.dahl.humbert:p08a} a
sequence of metrics $g_i$ is constructed on~$N$. To prove the surgery
formula one has to show 
\begin{equation}\label{surg.to.show}
\limsup_{i\to\infty} \mu(N,g_i) \geq \min (\mu(M,g),\La_{n,k})
\end{equation}
for a suitable positive constant $\La_{n,k}$.

The solution of the Yamabe problem on $(N,g_i)$ provides positive
smooth functions $u_i\in C^\infty(N)$ satisfying the Yamabe equation
\begin{equation*}
L^{g_i}u = \mu_i u_i^{p_n-1}
\end{equation*}
where $\mu_i=\mu(N,g_i)$ and $\|u_i\|_{L^{p_n}}=1$.

In order to prove \eqref{surg.to.show}, one analyses the ``limits'' of
the functions $u_i$ in various cases. In some cases these functions
$u_i$ ``converge'' to a nontrivial solution of the Yamabe equation 
$L^g u = \mu u^{p_n-1}$ on~$(M,g)$ with $\mu = \limsup \mu_i$, and
it follows that $\limsup_{i\to\infty} \mu(N,g_i) \geq \mu(M,g)$. In
other cases the functions concentrate in some points, and
$\limsup_{i\to\infty} \mu(N,g_i) \geq \mu(\mS^n)$ follows by
``blowing-up'' such points. This means that one suitably rescales
normal coordinates centered in the blow-up point, and then the
solutions of the Yamabe equation converge to a solution on $\mR^n$,
which then yields a solution of the Yamabe equation on a sphere. 
This phenomenon is also often described in the literature by saying 
that ``a sphere bubbles off''. 

However, it can also happen that the functions $u_i$ converge to
a solution of the Yamabe equation on a model space $\mM_c^{n,k}$ with
$|c| \leq 1$, this corresponds to Sub\-ca\-ses~II.1.1 and~II.2 in the
proof of Theorem 6.1 in \cite{ammann.dahl.humbert:p08a}. In this case
one obtains points $x_i \in N$, such that the pointed Riemannian
manifolds $(N,g_i,x_i)$ ``converge'' to $(\mM_c^{n,k},\bar x)$. Here 
``convergence'' means that balls of arbitrary radius $R$ around $x_i$
in $(N,g_i)$ converge for fixed $R$ and $i\to \infty$ in the
$C^\infty$-sense to a ball of radius $R$ around $\bar x$ in
$\mM_c^{n,k}$. The functions $u_i$ will then converge to a positive
solution $\bar u$ of the Yamabe equation~\eqref{eq.conf.1} on
$\mM_c^{n,k}$. The $L^{p_n}$-norm of the limit function does not increase, 
that is $\|\bar u\|_{L^{p_n}}\leq 1$.

This raises the following question. 

\begin{question}
Assume that $\bar u\in C^\infty(\mM_c^{n,k})$ is a positive solution
of the Yamabe equation $L^{G_c} \bar u = \lambda \bar u^{p_n-1}$ with
$0<\|\bar u\|_{L^{p_n}}\leq 1$. Does this imply $\la \geq \mu(\mM_c^{n,k})$?
\end{question}

If $\bar u$ is in $L^2$ (and thus in the Sobolev space $H^{1,2}$), then 
integration by parts 
$\int \bar u \De \bar u \, dv = \int |d \bar u|^2 \, dv$ is allowed 
on $\mM_c^{n,k}$ and it easily follows that that the answer to the 
question is positive. In this case we will say that the
$L^{p_n}$-$L^2$-implication holds.

In turn the $L^{p_n}$-$L^2$-implication implies that 
\eqref{surg.to.show} holds for 
\[
\La_{n,k} \definedas \inf_{c\in[-1, 1]} \mu(\mM_c^{n,k}).
\]

We will find conditions under which the $L^{p_n}$-$L^2$-implication
holds. But we will also obtain examples where it is violated, see
Section~\ref{sec.counterex}. 


The fact that the $L^{p_n}$-$L^2$-implication is violated in some
cases led to a technical difficulty in \cite{ammann.dahl.humbert:p08a} 
which was solved by introducing the constant $\La_{n,k}^{(2)}$ into
the definition of $\La_{n,k}$ in
\cite[Definition~3.2]{ammann.dahl.humbert:p08a}. This is sufficient
for proving the \emph{positivity} of the constant $\La_{n,k}$. 
However, in order to obtain an explicit positive lower bound
for~$\La_{n,k}$ one would like to avoid the constant
$\La_{n,k}^{(2)}$. The possibility to prove the
$L^{p_n}$-$L^2$-implication in some cases was mentioned in 
\cite[Remark~3.4]{ammann.dahl.humbert:p08a}. As a consequence finding
a positive lower bound for $\La_{n,k}$ reduces to finding a positive
lower bound for the constants $\mu(\mM_c^{n,k})$, uniform in 
$c \in [0,1]$. 

In the meantime new results for the explicit lower bounds for
$\mu(\mM_c^{n,k})$ were obtained in \cite{ammann.dahl.humbert:p11} and 
\cite{ammann.dahl.humbert:p11c:sigmaexp}, and a proof of 
\cite[Remark~3.4]{ammann.dahl.humbert:p08a} is needed. The goal of the 
present article is to provide this proof.

\subsection{Modified conformal Yamabe constants}

The technical difficulty described in the previous subsection 
required the introduction of a modified conformal Yamabe constant. 
In fact, two different subcases
require two versions of modified constants, namely the modified
conformal Yamabe constants $\mu^{(1)} (N,h)$ and $\mu^{(2)} (N,h)$,
defined below. Our article aims to give some clarification of the
relation between these invariants for the model spaces $\mM_c^{n,k}$. 

Let $(N,h)$ be a Riemannian manifold of dimension $n$. For $i=1,2$
we let $\Omega^{(i)}(N,h)$ be the set of non-negative $C^2$ functions
$u$ on $N$ which solve the Yamabe equation 
\[
L^h u = \mu u^{p_n - 1}
\]
for some $\mu = \mu(u) \in \mR$. We also require that the functions 
$u \in \Omega^{(i)}(N,h)$ satisfy 
\begin{itemize}
\item[{\rm (a)}] $u \not \equiv 0$,
\item[{\rm (b)}] $\|u\|_{L^{p_n}(N)} \leq 1$,
\item[{\rm (c)}] $u \in L^{\infty}(N)$,
\end{itemize}
together with
\begin{enumerate}
\item[{\rm (d1)}] $u \in L^2(N)$, for $i=1$, 
\end{enumerate}
or
\begin{enumerate}
\item[{\rm (d2)}] $\mu(u) \|u\|^{p_n - 2}_{L^{\infty}(N)} \geq
 \frac{(n-k-2)^2(n-1)}{8(n-2)}$, for $i=2$. 
\end{enumerate}
For $i=1,2$ we set
\begin{equation*} 
\mu^{(i)} (N,h) \definedas \inf_{u \in \Omega^{(i)}(N,h)} \mu(u).
\end{equation*}
In particular $\mu^{(i)}(N,h) = \infty$ if $\Omega^{(i)}(N,h)$ is
empty. If $N$ is compact then the solution of the Yamabe problem
trivially implies $\mu(N,h) = \mu^{(1)}(N,h) = \mu^{(2)}(N,h)$. 

We will use this for $(N,h)=\mM_c^{n,k}$. 
In \cite[Lemma~3.5]{ammann.dahl.humbert:p08a} we already showed 
$\mu^{(1)} (\mM_c^{n,k})\geq \mu (\mM_c^{n,k})$ if $0\leq k\leq n-3$.
In the present article we will show that (b) implies (d1) 
in many cases. As a consequence we will obtain
\begin{equation*}
\mu^{(2)} (\mM_c^{n,k})
\geq \mu^{(1)} (\mM_c^{n,k})
\geq \mu (\mM_c^{n,k})
\end{equation*}
in theses cases. We refer to Theorem~\ref{thlp},
Corollary~\ref{coro1}, and Corollary~\ref{maincor} for details.

\subsection{The numbers $\Lambda_{n,k}$}
\label{Lambda.def}

For integers $n \geq 3$ and $0 \leq k \leq n-3$ set
\begin{equation*}
\La^{(i)}_{n,k} 
\definedas 
\inf_{c \in [-1,1]} 
\mu^{(i)} (\mM_c^{n,k}) 
\end{equation*}
and
\begin{equation*}
\La_{n,k} 
\definedas
\min \left\{ \La^{(1)}_{n,k},\La^{(2)}_{n,k}\right\}.
\end{equation*}
It is not hard to show that $\La_{n,0} = \mu(\mS^n)$,
see \cite[Subsection 3.5]{ammann.dahl.humbert:p08a}. 
The following positivity result for $\La_{n,k}$ is proved 
in~\cite[Theorem 3.3]{ammann.dahl.humbert:p08a}.
\begin{theorem} 
For all $n \geq 3$ and $0 \leq k \leq n-3$, we have 
$\La_{n,k} >0$.
\end{theorem}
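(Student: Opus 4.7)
The plan is to bound $\La_{n,k}^{(1)}$ and $\La_{n,k}^{(2)}$ from below by positive constants separately, so that $\La_{n,k}=\min(\La_{n,k}^{(1)},\La_{n,k}^{(2)})>0$.

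First, for $\La_{n,k}^{(1)}$: given $u\in\Omega^{(1)}(\mM_c^{n,k})$, condition (d1) gives $u\in L^2$. Combined with $u\in L^{p_n}\cap L^\infty$ and the Yamabe equation $L^{G_c}u=\mu(u)u^{p_n-1}$, a cut-off argument (testing against $\chi_R^2 u$ and letting $R\to\infty$, using the boundedness of $\Scal^{G_c}$) shows $u\in H^{1,2}(\mM_c^{n,k})$ and yields
\[
\cF^{G_c}(u)=\mu(u)\|u\|_{L^{p_n}}^{p_n-2}.
\]
Since $\|u\|_{L^{p_n}}\leq 1$ and $p_n>2$, a density argument with $C_c^\infty$ functions produces $\mu(u)\geq\mu(\mM_c^{n,k})$, so $\mu^{(1)}(\mM_c^{n,k})\geq\mu(\mM_c^{n,k})$. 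It remains to show $\inf_{c\in[-1,1]}\mu(\mM_c^{n,k})>0$. At $c=\pm 1$, the conformal equivalence of $\mM_{\pm 1}^{n,k}$ with $\mS^n\setminus\mS^k$, the conformal invariance of $\mu$, and the inclusion $C_c^\infty(\mS^n\setminus\mS^k)\hookrightarrow C_c^\infty(\mS^n)$ give $\mu(\mM_{\pm 1}^{n,k})\geq\mu(\mS^n)>0$. For $c\in(-1,1)$, continuity of $c\mapsto\mu(\mM_c^{n,k})$ on the compact interval $[-1,1]$ --- a consequence of the uniformly bounded geometry of the family and a quasi-isometric comparison of the nearby metrics $\eta_c^{k+1}$ --- yields a uniform positive lower bound.

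Second, for $\La_{n,k}^{(2)}$: fix $M>0$ and let $u\in\Omega^{(2)}(\mM_c^{n,k})$ with $\mu(u)\leq M$. Rewriting the Yamabe equation as $a_n\De^{G_c}u=Vu$ with $V\definedas\mu(u)u^{p_n-2}-\Scal^{G_c}$, and using the arithmetic identity $(p_n-2)\cdot n/2=p_n$, one finds $\|V\|_{L^{n/2}(B_1(x))}\leq M+C(n,k)$ uniformly in $x$ and $c\in[-1,1]$. A Brezis--Kato iteration on unit balls, followed by standard interior elliptic estimates, then produces a uniform bound $\|u\|_{L^\infty}\leq C'(n,k,M)$ via the uniform bounded geometry of $\mM_c^{n,k}$. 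Condition (d2) now delivers
\[
\mu(u)\geq\frac{(n-k-2)^2(n-1)}{8(n-2)\,C'(n,k,M)^{p_n-2}}=:D(n,k,M)>0.
\]
Hence $\mu(u)\geq\min\{M,D(n,k,M)\}$ for every $u\in\Omega^{(2)}(\mM_c^{n,k})$ and every $c\in[-1,1]$; choosing e.g.\ $M=1$ gives $\La_{n,k}^{(2)}>0$.

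The main technical obstacle is the uniformity in $c\in[-1,1]$ of the Brezis--Kato constant $C'(n,k,M)$, which hinges on the uniformly bounded geometry of the family $\{\mM_c^{n,k}\}$---uniform injectivity radius, bounded curvature tensor, and uniformly bounded $\Scal^{G_c}$---all immediate from the explicit product structure $\mH_c^{k+1}\times\mS^{n-k-1}$ and the boundedness of $c$. A secondary difficulty is the continuity of $c\mapsto\mu(\mM_c^{n,k})$ used in the first step; this is handled by quasi-isometric comparison of the metrics $\eta_c^{k+1}$ for nearby values of $c$, together with a direct test-function analysis near $c=0$ to pass continuously to the limit $\mR^{k+1}\times\mS^{n-k-1}$.
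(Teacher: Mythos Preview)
The paper does not prove this theorem; it merely quotes the statement and refers to \cite[Theorem~3.3]{ammann.dahl.humbert:p08a} for the proof. So there is no in-paper argument to compare against, and I can only comment on the soundness of your sketch.

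The global architecture is right and matches the cited source: treat $\La_{n,k}^{(1)}$ and $\La_{n,k}^{(2)}$ separately, reduce $\La_{n,k}^{(1)}$ to $\inf_c\mu(\mM_c^{n,k})$ via the identity $\cF^{G_c}(u)=\mu(u)\|u\|_{L^{p_n}}^{p_n-2}$ for $u\in\Om^{(1)}$ (this is exactly \cite[Lemma~3.5]{ammann.dahl.humbert:p08a}), and turn (d2) into a lower bound for $\mu(u)$ once $\|u\|_{L^\infty}$ is controlled. Two of your steps, however, are not justified by what you wrote.

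\emph{The $\La^{(1)}$ part.} Your continuity argument ``by quasi-isometric comparison of the nearby metrics $\eta_c^{k+1}$'' fails as stated: for $|c|\neq|c'|$ the ratio $\sh{c}(r)/\sh{c'}(r)$ is unbounded as $r\to\infty$, so $\eta_c^{k+1}$ and $\eta_{c'}^{k+1}$ are \emph{not} globally quasi-isometric (and the passage $c\to 0$ changes the volume growth from exponential to polynomial). Convergence of the metrics on compacta yields only upper semicontinuity of $c\mapsto\mu(\mM_c^{n,k})$; it is lower semicontinuity that would rule out the infimum being zero, and that needs a different mechanism. Moreover, even granting continuity, positivity at the endpoints $c=\pm 1$ alone says nothing about positivity in the interior; you would still need $\mu(\mM_c^{n,k})>0$ for every $c$, which you have not argued (and which is not obvious when $\Scal^{G_c}<0$, e.g.\ for $k$ close to $n-3$ and $c$ close to $1$).

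\emph{The $\La^{(2)}$ part.} The claim that Brezis--Kato on unit balls produces $\|u\|_{L^\infty}\le C'(n,k,M)$ with $C'$ depending only on $(n,k,M)$ elides the critical-exponent difficulty. Your bound $\|V\|_{L^{n/2}(B_1)}\le M+C(n,k)$ is correct, but at the borderline exponent $n/2$ the first bootstrap step from $L^{p_n}$ to $L^{p_n+\ep}$ requires choosing a radius $\de$ with $\|V\|_{L^{n/2}(B_\de)}$ small, and this $\de$ a priori depends on the modulus of integrability of $u^{p_n}$, hence on $u$ itself, not merely on $\|V\|_{L^{n/2}}$. Obtaining a $C'$ uniform in $u$ needs an additional ingredient---typically a rescaling/blow-up around a maximum, passing to a limiting solution on $\mR^n$ with $\|v\|_{L^{p_n}}\le 1$, which forces $\mu\ge\mu(\mS^n)$ and contradicts $\mu\le M$ for $M$ chosen small. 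You identify uniformity in $c$ as the main obstacle; the uniformity in $u$ is actually the more serious one.
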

Furthermore, the following surgery result is concluded in 
\cite[Corollary~1.4]{ammann.dahl.humbert:p08a}.
\begin{theorem}
Inequality~\eqref{eq.sigma.surg} holds for $0 \leq k \leq n-3$
and the numbers $\La_{n,k} > 0$ defined above.
\end{theorem}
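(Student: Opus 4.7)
The plan is to follow the surgery-and-convergence strategy of \cite{ammann.dahl.humbert:p08a}. Fix $\ep > 0$ and choose a metric $g$ on $M$ with $\mu(M,g) \geq \si(M) - \ep$. The surgery construction produces a sequence of metrics $g_i$ on $N$ that agree with $g$ outside a small neighborhood of the $k$-sphere along which surgery is performed, and are modeled on a thin neck inside that neighborhood. Solving the Yamabe problem on each $(N,g_i)$ yields positive smooth functions $u_i$ with $\|u_i\|_{L^{p_n}(N,g_i)}=1$ and $L^{g_i}u_i=\mu_i u_i^{p_n-1}$, where $\mu_i=\mu(N,g_i)$. The main task is to prove
\[
\liminf_{i\to\infty}\mu_i \;\geq\; \min(\mu(M,g),\La_{n,k}),
\]
since letting $g$ vary and then $\ep\to 0$ yields~\eqref{eq.sigma.surg}.

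To obtain this lower bound I would perform a blow-up analysis on the sequence $(u_i)$. After passing to a subsequence, three possibilities arise. First, a suitable rescaling of $u_i$ may converge in $C^2_{\rm loc}$ on $M\setminus S^k$ to a nontrivial positive solution of the Yamabe equation on $(M,g)$, giving $\liminf\mu_i\geq\mu(M,g)$. Second, $u_i$ may concentrate at an isolated point of $M\setminus S^k$; rescaling in normal coordinates produces a solution of the Yamabe equation on $\mR^n$, which by conformal equivalence with $\mS^n\setminus\{\mathrm{pt}\}$ yields $\liminf\mu_i\geq\mu(\mS^n)$. Third, $u_i$ may concentrate along the surgery neck at a different rate, in which case the rescaled functions converge on compact subsets to a positive bounded function $\bar u$ on some model space $\mM_c^{n,k}$, $|c|\leq 1$, solving $L^{G_c}\bar u=\la\bar u^{p_n-1}$ with $\la=\lim\mu_i$ and $0<\|\bar u\|_{L^{p_n}}\leq 1$.

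In this third case $\bar u$ automatically satisfies conditions (a), (b), and (c) from the definition of $\Omega^{(i)}(\mM_c^{n,k})$. The heart of the proof is a dichotomy, based on the asymptotic size of $\mu_i\|u_i\|_\infty^{p_n-2}$ at the concentration points, which forces $\bar u$ to satisfy at least one of (d1) or (d2). Hence $\bar u\in\Omega^{(1)}(\mM_c^{n,k})\cup\Omega^{(2)}(\mM_c^{n,k})$, so $\la\geq\La^{(1)}_{n,k}$ or $\la\geq\La^{(2)}_{n,k}$, and $\liminf\mu_i\geq\La_{n,k}$ in either subcase. Combining all three cases (noting that the second case gives $\mu(\mS^n)$, which is bounded below by $\La_{n,k}$ in the regimes relevant here) produces the inequality above.

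The main obstacle will be the blow-up analysis in the third case: one must establish uniform $\ep$-regularity estimates along the neck, rule out secondary bubbling at finer scales, and verify that the model-space limit is nontrivial with either (d1) or (d2) holding. The presence of the auxiliary invariant $\La^{(2)}_{n,k}$ in the definition of $\La_{n,k}$ is essential at this step, since the counterexamples in Section~\ref{sec.counterex} show that condition~(b) does not always imply~(d1); removing the dependence on $\La^{(2)}_{n,k}$ is precisely the purpose of the $L^{p_n}$-$L^2$-implication theorems proved later in the article.
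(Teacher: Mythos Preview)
The paper does not prove this statement here; it is quoted verbatim from \cite[Corollary~1.4]{ammann.dahl.humbert:p08a}, and the present article only supplies a heuristic summary of that argument in Subsection~\ref{subsec.role}. Your sketch is a faithful outline of that cited proof and matches the paper's own summary closely: the sequence of surgery metrics $g_i$, the Yamabe minimizers $u_i$, and the trichotomy (limit on $(M,g)$; point concentration giving a sphere bubble; neck concentration giving a limit on some $\mM_c^{n,k}$, corresponding to Subcases~II.1.1 and~II.2 of \cite[Theorem~6.1]{ammann.dahl.humbert:p08a}) are exactly what the paper describes, as is the role of the (d1)/(d2) dichotomy in forcing $\bar u\in\Omega^{(1)}\cup\Omega^{(2)}$.

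Two minor remarks. First, the paper phrases the key inequality as $\limsup_i\mu(N,g_i)\geq\min(\mu(M,g),\La_{n,k})$ rather than $\liminf$; this is what one actually needs, since $\si(N)\geq\limsup_i\mu(N,g_i)$. Second, in the sphere-bubble case your parenthetical ``$\mu(\mS^n)\geq\La_{n,k}$'' is not what is used; rather one invokes Aubin's inequality $\mu(\mS^n)\geq\mu(M,g)$, so that this case already lands in the first branch of the minimum. Neither point affects the validity of your outline.
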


\section{Main Theorem}

\begin{theorem} \label{thlp}
Let $c\in [-1,1]$ and let 
$u \in L^{\infty}(\mM_c^{n,k}) \cap L^{p_n}(\mM_c^{n,k})$
be a smooth positive solution of 
\begin{equation} \label{eqyamabe} 
L^{G_c} u = \mu u^{p_n - 1}. 
\end{equation}
Assume that 
\begin{equation} \label{assumpnk}
2k\,|c| < n(n-k-2),
\end{equation} 
then $u \in L^2(\mM_c^{n,k})$.
\end{theorem}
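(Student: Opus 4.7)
The plan is to derive a weighted $L^2$ energy identity by combining the Yamabe equation with the sharp Hardy inequality on the hyperbolic factor $\mH_c^{k+1}$, and then to pass to the limit in compactly supported cutoffs. Since $u\in L^\infty\cap L^{p_n}$ is a positive smooth solution of a locally uniformly elliptic equation, local elliptic regularity together with the absolute continuity of the $L^{p_n}$-norm on expanding annuli implies $u(x)\to 0$ as the hyperbolic radial coordinate $r(x)\to\infty$. The crucial analytic tool is the Hardy inequality
\begin{equation*}
\int_{\mM_c^{n,k}} |dv|^2\,dv^{G_c} \;\geq\; \frac{c^2k^2}{4}\int_{\mM_c^{n,k}} v^2\,dv^{G_c}, \qquad v\in C_c^\infty(\mM_c^{n,k}),
\end{equation*}
which follows because the bottom of the $L^2$-spectrum of $-\Delta$ on $\mH_c^{k+1}$ equals $c^2k^2/4$; it extends immediately from $\mH_c^{k+1}$ to the product by integrating over the sphere factor, since $|dv|_{G_c}^2 \geq |d_\mH v|^2$.

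For a compactly supported radial cutoff $\phi=\phi_R$ with $\phi_R\equiv 1$ on $\{r\leq R\}$, I would multiply the Yamabe equation by $u\phi_R^2$ and integrate by parts to obtain
\begin{equation*}
a_n\int\phi_R^2|du|^2\,dv + 2a_n\int u\phi_R\langle du,d\phi_R\rangle\,dv + \Scal^{G_c}\int u^2\phi_R^2\,dv \;=\; \mu\int u^{p_n}\phi_R^2\,dv.
\end{equation*}
Applying the Hardy inequality to $v = u\phi_R$ and expanding $|d(u\phi_R)|^2 = \phi_R^2|du|^2 + 2u\phi_R\langle du,d\phi_R\rangle + u^2|d\phi_R|^2$, the cross terms involving $\langle du,d\phi_R\rangle$ cancel exactly and one arrives at
\begin{equation*}
\Bigl[\Scal^{G_c} + \tfrac{a_n c^2 k^2}{4}\Bigr]\int u^2\phi_R^2\,dv \;\leq\; \mu\int u^{p_n}\phi_R^2\,dv + a_n\int u^2|d\phi_R|^2\,dv.
\end{equation*}
A direct algebraic simplification using $\Scal^{G_c} = -c^2k(k+1)+(n-k-1)(n-k-2)$ and $a_n=4(n-1)/(n-2)$ reduces the bracket to $\tfrac{n-k-2}{n-2}\bigl[(n-k-1)(n-2)-c^2k\bigr]$, which is strictly positive under our hypothesis: using $|c|\leq 1$ and the elementary identity $2(n-k-1)(n-2)-n(n-k-2)=(n-4)(n-k)+4\geq 4$ valid for $n\geq 3$, $0\leq k\leq n-3$, one has $c^2 k\leq k|c|<n(n-k-2)/2\leq (n-k-1)(n-2)$.

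It remains to pass to the limit $\phi_R\nearrow 1$ while controlling the error $\int u^2|d\phi_R|^2\,dv$. Setting $I(R)\definedas\int_{\{r\leq R\}}u^2\,dv$ and choosing $|d\phi_R|$ concentrated in the annulus $\{R\leq r\leq R+1\}$, the energy estimate becomes a recursive inequality of the form $I(R+1)\geq \rho I(R) - D$ with $\rho>1$, to be compared against the H\"older upper bound $I(R)\leq \|u\|_{L^{p_n}}^2\vol(B_R)^{1-2/p_n}\leq Ce^{2kcR/n}$, which uses the exponential volume-growth rate $kc$ of $\mM_c^{n,k}$ together with $1-2/p_n=2/n$. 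The main obstacle is that a naive cutoff produces only a weaker threshold than $2k|c|<n(n-k-2)$; extracting the sharp condition likely requires a refined cutoff choice (e.g.\ variable-width cutoffs $\phi_R^M$ with $|d\phi_R^M|^2\sim 1/M^2$ supported on $\{R\leq r\leq R+M\}$, optimized over $M$, or an iterated bootstrap on nested annuli) that matches the exact balance between the exponential volume growth rate and the $L^{p_n}$-decay rate of $u$. Once $\rho$ is shown to exceed the volume-growth rate $e^{2kc/n}$ under the theorem's hypothesis, the recursion forces $I(R)$ to remain bounded, yielding $u\in L^2(\mM_c^{n,k})$.
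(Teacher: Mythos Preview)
Your setup is sound and your computation is correct: the bracket $\Scal^{G_c}+a_nc^2k^2/4$ equals $a_n\alpha_c$ where
\[
\alpha_c \;=\; \frac{c^2k^2}{4}+\frac{\Scal^{G_c}}{a_n}\;=\;\frac{(n-k-2)}{4(n-1)}\bigl[(n-2)(n-k-1)-kc^2\bigr],
\]
and this is exactly the asymptotic coefficient that the paper obtains in its differential inequality. So the two approaches agree on the key constant.

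The gap is in the last paragraph, and it is not a technicality. From your energy identity you get, after absorbing the $L^{p_n}$ term into a constant $D$,
\[
\alpha_c\,I(R)\;\leq\; D \;+\; \frac{1}{M^2}\bigl[I(R+M)-I(R)\bigr],
\]
hence $I(R+M)\geq (1+\alpha_cM^2)I(R)-M^2D$. Iterating and optimizing over~$M$ yields a growth rate for $I$ of at most
\[
\sup_{M>0}\frac{1}{M}\ln(1+\alpha_cM^2)\;=\;\sqrt{\alpha_c}\cdot\sup_{x>0}\frac{\ln(1+x^2)}{x}\;\approx\;0.80\,\sqrt{\alpha_c},
\]
whereas your H\"older bound $I(R)\leq Ce^{2k|c|R/n}$ forces the rate to be at most $2k|c|/n$. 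To reach a contradiction under the hypothesis $2k|c|<n(n-k-2)$ you would need the rate $2\sqrt{\alpha_c}$ (which at $c=\pm1$ equals $n-k-2$). The factor $\approx0.8/2$ is lost structurally: a first-order recursion on the integrated quantity $I$ cannot recover the sharp exponential rate that a \emph{second-order} inequality delivers. Concretely, your threshold becomes $n(n-k-2)>5k$ rather than $n(n-k-2)>2k|c|$, which already fails for $n=5$, $k=2$, $c=1$ and for $k=n-4$ whenever $n\geq 7$. Your proposed fixes (variable-width cutoffs optimized over $M$, iterated bootstraps) do not escape this ceiling: optimizing over $M$ is precisely the computation above.

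The paper avoids this loss by working not with $I(R)$ but with the fibrewise $L^2$-norm $\omega(r)=(\int_{F_r}u^2)^{1/2}$ and deriving, via the radial decomposition of $\Delta^{G_c}$ and Cauchy--Schwarz on the fibre, the genuine second-order differential inequality $\omega''(r)\geq\gamma^2\omega(r)$ for any $\gamma<\sqrt{\alpha_c}$ and $r$ large. A standard ODE dichotomy then gives either $\omega(r)\leq Ce^{-\gamma r}$ (and $u\in L^2$ follows) or $\omega(r)\geq Ce^{\gamma r}$; in the latter case H\"older on the fibres gives $\int\omega^{p_n}e^{-2k|c|r/(n-2)}\,dr<\infty$, which forces $\gamma\leq k|c|/n$ and contradicts $\gamma\to(n-k-2)/2$ under the hypothesis. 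The second-order information in $\omega''\geq\gamma^2\omega$ is exactly what produces the sharp rate $\gamma$ for $\omega$ (equivalently $2\gamma$ for $I$), and this is the missing ingredient in your argument.
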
 

Inequality~\eqref{assumpnk} holds when 
\begin{itemize} 
\item $n \leq 5$, $k \in \{0, \cdots , n-3 \}$, and $|c|\leq 1$; or 
\item $n \geq 6$, $k \in \{0, \cdots , n-4 \}$, and $|c|\leq 1$; or 
\item $n = 6$, $k=n-3$, and $|c|<1$.
\end{itemize}
This follows from the fact that $2k \leq n(n-k-2)$ is equivalent to 
$k \leq n - 4 + \frac{8}{n+2}$. 


\begin{cor} \label{coro1}
We have 
\begin{equation*}
\mu^{(2)}(\mM_c^{n,k})
\geq \mu(\mM_c^{n,k})
\end{equation*}
for all $k\leq n-4$. 
The same statement holds for $k=n-3$ and $n\in \{4,5\}$. 
\end{cor}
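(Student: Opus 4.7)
The plan is to reduce the corollary to the set inclusion $\Omega^{(2)}(\mM_c^{n,k})\subseteq\Omega^{(1)}(\mM_c^{n,k})$. Once this inclusion is in hand, passing to the infimum reverses it and yields
\[
\mu^{(2)}(\mM_c^{n,k})\;\geq\;\mu^{(1)}(\mM_c^{n,k})\;\geq\;\mu(\mM_c^{n,k}),
\]
the second inequality being exactly Lemma~3.5 of \cite{ammann.dahl.humbert:p08a}, which is recalled in the discussion preceding the statement.

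To produce the inclusion I would take any $u\in\Omega^{(2)}(\mM_c^{n,k})$. Conditions (a)--(c) in the definition of $\Omega^{(2)}$ force $u$ to be a nontrivial, non-negative $C^2$-solution of the Yamabe equation \eqref{eqyamabe} lying in $L^\infty\cap L^{p_n}$. Standard elliptic bootstrapping applied to $L^{G_c}u=\mu(u)\, u^{p_n-1}$ upgrades $u$ to $C^\infty$, and rewriting the equation in the form $-a_n\Delta^{G_c} u+(\mu(u) u^{p_n-2}-\Scal^{G_c})u=0$, a linear equation with bounded zero-order coefficient (since $u\in L^\infty$), allows the strong maximum principle on the connected manifold $\mM_c^{n,k}$ to promote non-negativity to strict positivity $u>0$. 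Thus $u$ is exactly the type of function to which Theorem~\ref{thlp} applies, and invoking Theorem~\ref{thlp} yields $u\in L^2(\mM_c^{n,k})$, i.e.\ condition (d1). Hence $u\in\Omega^{(1)}(\mM_c^{n,k})$, as desired; condition (d2) plays no further role.

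What remains is the arithmetic verification that the numerical hypothesis \eqref{assumpnk}, namely $2k|c|<n(n-k-2)$, is satisfied throughout the range covered by the corollary. For $k\leq n-4$ and $|c|\leq 1$ one has $2k|c|\leq 2(n-4)<2n\leq n(n-k-2)$, and for the two exceptional pairs $(n,k)\in\{(4,1),(5,2)\}$ corresponding to $k=n-3$ with $n\in\{4,5\}$ the inequality reads $2|c|<4$ and $4|c|<5$ respectively, both trivially valid for $|c|\leq 1$. These are precisely the cases in the bullet list following Theorem~\ref{thlp}. I do not expect a substantive obstacle: all the analytic content has been absorbed into Theorem~\ref{thlp}, and the corollary is essentially a clean composition of that theorem with the inequality $\mu^{(1)}(\mM_c^{n,k})\geq\mu(\mM_c^{n,k})$; the only point requiring care is matching the stated range of $(n,k)$ with the regime in which the $L^{p_n}$-$L^2$-implication has been established.
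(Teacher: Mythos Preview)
Your argument is correct and follows essentially the same route as the paper: both reduce the corollary to the inclusion $\Omega^{(2)}(\mM_c^{n,k})\subset\Omega^{(1)}(\mM_c^{n,k})$ via Theorem~\ref{thlp}, then invoke \cite[Lemma~3.5]{ammann.dahl.humbert:p08a} for $\mu^{(1)}\geq\mu$. You are in fact slightly more careful than the paper in explicitly bridging the gap between the hypotheses of Theorem~\ref{thlp} (smooth, strictly positive) and the a priori regularity of elements of $\Omega^{(2)}$ ($C^2$, non-negative) by appealing to elliptic bootstrapping and the strong maximum principle.
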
 

\begin{proof}[Proof of Corollary \ref{coro1}] 
Under the conditions of the corollary Assumption~\eqref{assumpnk} 
holds, and hence Theorem~\ref{thlp} implies 
\begin{equation*}
\Omega^{(2)}(\mM_c^{n,k}) 
\subset \Omega^{(1)}(\mM_c^{n,k})
\end{equation*}
and as a consequence we get 
\begin{equation*}
\mu^{(2)}(\mM_c^{n,k})
\geq \mu^{(1)}(\mM_c^{n,k}).\
\end{equation*} 
On the other hand it is proved in 
\cite[Lemma~3.5]{ammann.dahl.humbert:p08a} that 
\begin{equation*}
\mu^{(1)}(\mM_c^{n,k})
\geq
\mu(\mM_c^{n,k})
\end{equation*}
for all $k \in \{ 0,\cdots, k-3\}$. The corollary follows.
\end{proof}

\begin{proof}[Proof of Theorem \ref{thlp}] 
We will now give the proof of Theorem \ref{thlp} in five steps. Let
$u$ be as in the statement of this theorem.

\begin{step} \label{step1} 
The function $u$ tends to $0$ at infinity. 
\end{step} 

We proceed by contradiction and assume that there is an $\ep > 0$ and
a sequence of points $(x_j)_{j \in \mN}$ tending to infinity with $j$ 
such that $u(x_j) \geq \ep$. Denote by $B(x,r)$ the ball of radius $r$
around a point $x$. By taking a subsequence of $(x_j)$ which
tends fast enough to infinity, we can assume that the balls $B(x_j,j)$
are all disjoint. Since $u$ is in $L^{p_n}$ we have
\begin{equation*}
\lim_{j \to \infty} \int_{B(x_j,j)} u^{p_n} \, dv^{G_c} = 0. 
\end{equation*}
Since $\mM_c^{n,k}$ is homogeneous, there are isometries 
$\phi_j : B(x_j,j) \to B(O,j)$ where $O$ is any fixed point in 
$\mM_c^{n,k}$. We now consider the functions 
$v_j \definedas u \circ \phi_j^{-1}$. They are bounded solutions of 
Equation \eqref{eqyamabe} which satisfy 
\begin{equation} \label{lpbounded}
\lim_{j \to \infty} \int_{B(O,j)} v_j^{p_n} \, dv^{G_c} = 0, 
\end{equation}
and
\begin{equation} \label{linfty}
v_j(O) \geq \ep.
\end{equation}

Let $K$ be a compact set containing the point $O$. Since $u$ is
bounded, standard elliptic theory implies that a subsequence of
$(v_j)$ tends to a function $v_K$ on $K$ in~$C^2$. Taking a sequence
$(K_s)$ such that $K_s \subset K_{s+1}$ and 
$\bigcup_s K_s = \mM_c^{n,k}$ we construct successive subsequences
$(v_{j,k_1,\cdots,k_s})$ tending to functions $v_{K_s}$ in $C^2(K_s)$
and such that $v_{K_{s+1}} = v_{K_s}$ on $K_s$. Setting 
$v \definedas v_{K_s}$ on $K_s$, we get a function belonging to
$C^2(\mM_c^{n,k})$. Finally \eqref{lpbounded} and \eqref{linfty} tell
us that 
\begin{equation*}
\int_{ \mM_c^{n,k}} v^{p_n} \, dv^{G_c} = 0 
\end{equation*}
and
\begin{equation*}
v(O) \geq \ep,
\end{equation*}
which gives the desired contradiction. This ends the proof of 
Step \ref{step1}.

We now work in polar coordinates on the hyperbolic space factor of
$\mM_c^{n,k}=\mH_c^{k+1}\times \mS^{n-k-1}$ as introduced in 
Subsection~\ref{model}. We thus study the metric 
$G_c = dr^2 + \sh{c}(r)^2 \rho^{k} + \rho^{n-k-1}$ on the manifold 
$(0,\infty) \times S^k \times S^{n-k-1}$. Using these coordinates,
we denote by $F_r$ the set of constant $r$, that is 
$F_r \definedas S^k \times S^{n-k-1}$ and we denote the restriction of
$g$ to $F_r$ by $g_r = \sh{c}(r)^2 \rho^{k} + \rho^{n-k-1}$. We define
\begin{equation*}
\om(r) \definedas 
\left( \int_{F_r} u^2 \, dv^{g_r} \right)^{\frac{1}{2}}
\end{equation*}
for $r>0$. Next we prove a differential inequality for $\om$.

\begin{step} \label{step2}
For any $\gamma$ with $0 < \gamma < \frac{n-k-2}{2}$ there is an
$r_0(\gamma)$ such that 
\begin{equation*}
\om''(r) \geq \gamma^2 \om(r)
\end{equation*} 
for all $r > r_0(\ga)$.
\end{step} 

The argument for this step is a modification of the proof of Theorem
5.2 in \cite{ammann.dahl.humbert:p08a}.

The Laplacian operators of the total space $\Delta^{G_c}$ and of the
fibers $\Delta^{g_r}$ are related by 
\begin{equation*}
\Delta^{G_c}
=
\Delta^{g_r} - \partial_r^2 + (n-1)H_r \partial_r
\end{equation*}
where $H_r$ denotes the mean curvature of the fiber $F_r$ in
$\mM_c^{n,k}$. It follows that 
\begin{equation*}
\begin{split}
\int_{F_r} u \Delta^{G_c} u \, dv^{g_r} 
&= 
\int_{F_r} 
\left( 
u \Delta^{g_r} u - u \pa_r^2 u + (n-1) H_r u \pa_r u 
\right)
\, dv^{g_r} \\
&=
\int_{F_r} 
\left( 
|\dvert u|^2 - u \pa_r^2 u + (n-1) H_r u\pa_r u 
\right)
\, dv^{g_r}.
\end{split}
\end{equation*}
where $\dvert u$ denotes the differential of $u$ along the fiber, that
is $\dvert u = d(u_{|F_r})$. Using Equation \eqref{eqyamabe} we get
\begin{equation} \label{eqconf1}
a_n \int_{F_r} 
\Big( u \pa_r^2 u - (n-1) H_r u \pa_r u \Big) 
\, dv^{g_r} 
\geq 
\Scal^{G_c} \om(r)^2 - \mu \int_{F_r} u^{p_n} \, dv^{g_r}.
\end{equation}
Computing the derivative of $\om(r)^2/2$ we get 
\begin{equation} \label{eq.udtu}
\begin{split}
\om'(r)\om(r) 
&= 
\frac{1}{2} \frac{d}{dr} \int_{F_r} u^2\, dv^{g_r} \\
&= 
\int_{F_r} u \partial_r u \, dv^{g_r}
- \frac{n-1}{2} H_r \om(r)^2,
\end{split}
\end{equation}
where we used that $H_r$ is constant on $F_r$. Differentiating this
again and using Inequality \eqref{eqconf1} we get 
\begin{equation} \label{w'2}
\begin{split}
\om'(r)^2 + \om''(r) \om(r)
&= 
\int_{F_r} (\partial_r u)^2 \, dv^{g_r} 
+ \int_{F_r} 
\Big(u \partial_r^2 u - (n-1) H_r u \partial_r u \Big)
\, dv^{g_r} \\
&\quad
- \frac{n-1}{2} (\partial_r H_r) \om(r)^2 
- (n-1) H_r \om'(r) \om(r) \\
&\geq
\int_{F_r} (\partial_r u)^2 \, dv^{g_r} 
+ 
\frac{\Scal^{G_c}}{a_n} \om(r)^2 
- 
\frac{\mu}{a_n} \int_{F_r} u^{p_n} \, dv^{g_r} \\
&\quad
- \frac{n-1}{2} (\partial_r H_r) \om(r)^2 
- (n-1) H_r \om'(r) \om(r). \\
 \end{split}
\end{equation}
{}From the Cauchy-Schwarz inequality we get
\begin{equation*}
\begin{split}
\om(r)^2 \int_{F_r} (\partial_r u)^2 \, dv^{g_r}
&\geq
\left( \int_{F_r} u (\partial_r u) \, dv^{g_r} \right)^2 \\
&=
\left( \om'(r)\om(r) + \frac{n-1}{2}H_r \om(r)^2 \right)^2 ,
\end{split}
\end{equation*}
where we uses Equation~\eqref{eq.udtu} in the second line. Thus,
\begin{equation} \label{dtu^2.A}
\int_{F_r} (\partial_r u)^2 \, dv^{g_r}
\geq
\left( \om'(r)+ \frac{n-1}{2} H_r \om(r) \right)^2 .
\end{equation}
Let $\ep > 0$ be a constant to be fixed later. By Step \ref{step1} we
have
\begin{equation} \label{upterm.A}
\frac{\mu}{a_n} \int_{F_r} u^{p_n} \,dv^{g_r} 
\leq
\ep \om(r)^2
\end{equation}
for all $r$ large enough (depending on $\ep$). Inserting 
\eqref{dtu^2.A} and \eqref{upterm.A} into \eqref{w'2} we obtain
\begin{equation*} 
\begin{split}
\om'(r)^2 + \om''(r) \om(r)
&\geq 
\left( \om'(r) + \frac{n-1}{2} H_r \om(r) \right)^2
+ \frac{\Scal^{G_c}}{a_n}
\om(r)^2 - \ep \om(r)^2 \\
&\quad
- \frac{n-1}{2} (\partial_r H_r) \om(r)^2 - (n-1) H_r \om'(r) \om(r),
\end{split}
\end{equation*}
or after some rearranging,
\begin{equation} \label{main3.A}
\om''(r) 
\geq
\underbrace{
\left( \frac{(n-1)^2}{4} H_r^2 + \frac{\Scal^{G_c}}{a_n} 
- \ep - \frac{n-1}{2} (\partial_r H_r) \right) 
}_{ =: \al(r) }
\om(r) .
\end{equation}
A computation tells us that 
\begin{equation*}
H_r 
= 
- \frac{k}{n-1} \partial_r \ln \sh{c}(r)
= 
\begin{cases}
- \frac{k}{n-1} c \coth(cr) & \text{if } c\neq 0, \\
- \frac{k}{n-1} \frac{1}{r} & \text{if } c=0,
\end{cases}
\end{equation*}
so in particular, 
\begin{equation} \label{meancur}
\lim_{r \to \infty} H_r = -\frac{k}{n-1} |c|
\end{equation} 
and 
\begin{equation} \label{meancur'}
\lim_{r \to \infty} \partial_r H_r = 0.
\end{equation}
Using \eqref{meancur} and \eqref{meancur'} together with $\Scal^{G_c}
= -c^2 k(k+1) + (n-k-1)(n-k-2)$ we see that the coefficient $\al(r)$ 
in the right hand side in \eqref{main3.A} tends to 
$\al_c - \ep$ where 
\begin{equation*}
\begin{split}
\al_c 
&\definedas
\frac{(n-1)^2}{4} \frac{k^2}{(n-1)^2} c^2 
+ \frac{n-2}{4(n-1)} \Big( -c^2 k(k+1) + (n-k-1)(n-k-2) \Big) \\
&=
- (n-k-2) \frac{k}{4(n-1)} c^2 + \frac{(n-2)(n-k-1)(n-k-2)}{4(n-1)} \\
&\geq
- (n-k-2) \frac{k}{4(n-1)} + \frac{(n-2)(n-k-1)(n-k-2)}{4(n-1)} \\
&=
\frac{(n-k-2)^2}{4}.
\end{split}
\end{equation*}
Here the inequality comes from the fact the coefficient of
$c^2$ is negative so the smallest value over $c \in [-1,1]$ is
attained for $c=1$. Choosing $\ep$ small enough we have proved
Step \ref{step2}.

By assumption $u$ and thus $\om$ are positive. As a consequence 
we can define
\begin{equation*}
\tau(r) \definedas ( \ln \om(r) )'
\end{equation*}
for $r > 0$.

\begin{step} \label{step3}
One of the following statements is true,
\begin{subequations}
\begin{align}
\liminf_{r\to \infty} \tau(r) &\geq (n-k-2)/2 \label{step3_liminf} , \\
\limsup_{r\to \infty} \tau(r) &\leq -(n-k-2)/2 \label{step3_limsup} .
\end{align}
\end{subequations}
\end{step} 

Assume \eqref{step3_limsup} is not true. Then there is a 
$\ti r \geq r_0(\gamma)$ with $\tau(\ti r) > -(n-k-2)/2$.
We assume $\tau(\ti r)^2 \leq \frac{(n-k-2)^2}4 - 2\ep$ where $\ep>0$. 
Choose $\gamma \definedas \sqrt{\frac{(n-k-2)^2}4 -\ep}$. Using
Step \ref{step2} we calculate 
\begin{equation*}
\tau'(r)
=
\frac{\om''(r)}{\om(r)} - \left(\frac{\om'(r)}{\om(r)}\right)^2
\geq 
\gamma^2 - \tau(r)^2
\end{equation*}
for all $r\geq \ti r$, and thus $\tau'(r)\geq \ep$ as long as 
$\tau(r)^2 \leq \frac{(n-k-2)^2}4 -2 \ep$. An easy argument 
on first order ordinary differential equations yields an $R > \ti r$
such that 
\begin{equation*}
\tau(r) \geq \sqrt{\frac{(n-k-2)^2}4 -2 \ep} 
\end{equation*}
for all $r \geq R$. As $\ep > 0$ can be chosen arbitrarily small we
conclude that \eqref{step3_liminf} must hold, and Step \eqref{step3}
follows.

\begin{step} \label{step4}
If \eqref{step3_liminf} holds then $L^{p_n}$-boundedness of $u$
contradicts the assumption \eqref{assumpnk}.
\end{step} 

Since $\mM^{n,k}_{-c} = \mM^{n,k}_c$ we may assume that $c \geq 0$. At
first we consider the case $c > 0$. In the following argument we
denote by $C$ a positive constant that might change value from line to
line. Since 
\begin{equation*}
\vol^{g_r}(F_r) 
= 
\int_{F_r} \left( \frac{ \sinh(cr)}{c} \right)^{k} 
\, dv^{\rho^k + \rho^{n-k-1}} 
\leq C e^{kcr} 
\end{equation*}
we get 
\begin{equation}\label{om.int.up} 
\begin{split} 
\int_{0}^{\infty} \om(r)^{p_n} e^{-\frac{2kc}{n-2} r} \, dr 
&= 
\int _{0}^{\infty}
\left(\int_{F_r} u^2 \, dv^{g_r} \right)^{\frac{p_n}{2}}
e^{-\frac{2kc}{n-2} r} \, dr \\
&\leq C \int_{0}^{\infty} \int_{F_r} u^{p_n} \, dv^{g_r} dr\\
&\leq C \int_{\mM^{n,k}_c} u^{p_n} \, dv^{G_c}
\end{split}
\end{equation}
using the H\"older inequality. This is bounded since $u$ is
assumed to be in $L^{p_n}(\mM^{n,k}_c)$. If \eqref{step3_liminf} holds
then for any $\gamma \in \left( 0, \frac{n-k-2}{2} \right)$
there is an $r_1 = r_1(\ga)$ so that
\begin{equation*}
\om(r) \geq C e^{\ga r}
\end{equation*}
for all $r \geq r_1$. Thus
\begin{equation} \label{om.int.low}
\int_{0}^{\infty} \om(r)^{p_n} e^{-\frac{2kc}{n-2} r} \, dr 
\geq 
C \int_{r_1}^{\infty} e^{br} \, dr
\end{equation}
where
\begin{equation*}
b \definedas p_n \ga -\frac{2kc}{n-2}= \frac{2}{n-2}(n \ga -kc).
\end{equation*}
If $b \geq 0$ then the right hand side of \eqref{om.int.low} is
infinite, which gives a contradiction to the boundedness
of \eqref{om.int.up}. Thus we have $b < 0$, implying $\ga < kc/n$. 
Taking $\ga \to (n-k-2)/2$ yields $(n-k-2)/2 \leq k|c|/n$, which
finishes the proof of Step \eqref{step4} for $c>0$. 

The case $c=0$ can be solved with similar estimates.

\begin{step}
Conclusion.
\end{step}

It remains to show the $L^2$-boundedness of $u$ if 
\eqref{step3_limsup} of Step~\ref{step3} holds. We choose any 
$\ga \in (0, (n-k-2)/2)$. If \eqref{step3_limsup} holds we have
\begin{equation*}
\om(r) \leq C e^{-\ga r}
\end{equation*}
for all $r \geq r_2(\ga)$, and by possibly enlarging $C$ this holds
for all $r$. From this estimate we have 
\begin{equation*}
\int_{\mM^{n,k}_c} u^2 \, dv^{G_c} 
= 
\int_0^{\infty} \om(r)^2 \, dr < \infty ,
\end{equation*}
which ends the proof of Theorem \ref{thlp}. 
\end{proof}

\section{A counterexample to Theorem \ref{thlp} for $k=n-3$}
\label{sec.counterex} 

As noted after Theorem~\ref{thlp}, Assumption~\eqref{assumpnk}
holds if $n \leq 6$, $k \in \{ 0, \dots, n-3 \}$, $|c|<1$ or 
$n \geq 7$, $k \in \{ 0, \dots, n-4 \}$. It is natural to ask whether
the conclusion of Theorem~\ref{thlp} holds for all $k\in\{0,\ldots,n-3\}$.
The following proposition answers this in the negative.

\begin{prop} \label{cexample}
Let $n \geq 7$. There exists a smooth positive function 
$u \in L^{\infty}(\mH^{n-2}_1 \times \mS^{2}) \cap
L^{p_n}(\mH^{n-2}_1 \times \mS^{2})$ which satisfies 
\begin{equation*}
L^{G_1} u = 0 
\end{equation*}
and is not in $L^2(\mH^{n-2}_1 \times \mS^{2})$. 
\end{prop}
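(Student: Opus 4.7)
The plan is to look for a solution of product type, $u(x,y) = \psi(x)$, depending only on the hyperbolic factor $x \in \mH^{n-2}_1$. Using $\Scal^{G_1} = -(n-3)(n-2) + 2 = -(n-1)(n-4)$, the equation $L^{G_1} u = 0$ collapses to an eigenvalue equation on the hyperbolic factor,
\begin{equation*}
\Delta^{\mH^{n-2}_1} \psi = \lambda \psi,
\qquad
\lambda \definedas \frac{(n-1)(n-4)}{a_n} = \frac{(n-2)(n-4)}{4} = \frac{(n-3)^2 - 1}{4}.
\end{equation*}
The key observation is that $\lambda$ sits \emph{strictly below} the bottom $(n-3)^2/4$ of the $L^2$-spectrum of $\Delta^{\mH^{n-2}_1}$. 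Hence no $L^2$-eigenfunction exists, but this is precisely the regime in which smooth, bounded, non-$L^2$ radial solutions appear.

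I would construct $\psi$ as a radial eigenfunction. The Liouville substitution $\psi(r) = (\sinh r)^{-(n-3)/2} f(r)$ removes the first-order term and turns the radial equation into
\begin{equation*}
f''(r) = \left[ \frac{1}{4} + \frac{(n-3)(n-5)}{4 \sinh^2 r} \right] f(r),
\end{equation*}
whose right-hand coefficient is strictly positive for $n \geq 7$. The solution regular at the origin behaves like $f(r) \sim r^{(n-3)/2}$ there, so $f, f' > 0$ immediately after $r = 0$. An elementary ODE argument — $f''=A(r)f$ with $A>0$ preserves positivity and monotonicity once $f, f' > 0$ — keeps both $f$ and $f'$ positive for all $r > 0$, producing a smooth, positive, bounded radial $\psi$ on $\mH^{n-2}_1$.

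At infinity the two asymptotic modes of $f$ are $e^{\pm r/2}$, corresponding to $\psi \sim e^{-(n-4)r/2}$ (slow decay) and $\psi \sim e^{-(n-2)r/2}$ (fast decay). If $\psi$ carried only the fast mode it would be in $L^2(\mH^{n-2}_1)$, contradicting the spectral obstruction above; positivity then forces $\psi(r) \sim c\, e^{-(n-4)r/2}$ with $c > 0$. Using the volume form $\om_2 (\sinh r)^{n-3} \, dr$ on $\mM_1^{n, n-3}$, the $L^q$-integral of $u(x,y) \definedas \psi(x)$ reduces (near infinity) to $\int^\infty e^{r[(n-3) - q(n-4)/2]}\, dr$. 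For $q = p_n = 2n/(n-2)$ the exponent simplifies to $(6-n)/(n-2)$, which is negative precisely when $n \geq 7$, whereas for $q = 2$ the exponent equals $+1$. Thus $u$ is smooth, positive, bounded, lies in $L^{p_n}(\mM_1^{n,n-3})$, and fails to be in $L^2(\mM_1^{n,n-3})$, which is the counterexample. The main obstacle, the positivity of $\psi$ throughout $\mH^{n-2}_1$, is handled cleanly by the fact that the coefficient $A(r)$ in the reduced ODE is positive for $n \geq 7$; this is also where the condition $n \geq 7$ enters the argument.
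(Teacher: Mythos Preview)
Your argument is correct and takes a genuinely different route from the paper. The paper exploits the conformal equivalence $\mS^n\setminus\mS^{n-3}\cong\mH^{n-2}_1\times\mS^2$: it sets $H(x)=\int_{\mS^{n-3}}\Gamma_y(x)\,dv(y)$ with $\Gamma_y$ the Green's function of $L^{\rho^n}$, reads off the singularity $H\sim c_n'/r'$ along $\mS^{n-3}$, and transplants $u=f^{-1}H$ via the conformal factor; the $L^{p_n}$, $L^\infty$, and non-$L^2$ properties come from the asymptotics of $H$ and $f$. You instead work intrinsically on the product, reduce to a radial eigenvalue problem on $\mH^{n-2}_1$, and use the Liouville substitution together with the spectral gap $\lambda=(n-3)^2/4-1/4$ to pin down the slow decay $e^{-(n-4)r/2}$. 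Your route is more self-contained---no conformal geometry, no Green's function estimates---and makes the threshold $n\geq 7$ transparent through the single exponent $(6-n)/(n-2)$; the paper's route is more geometric and ties the counterexample directly to the conformal models underlying the surgery construction. The two constructions in fact produce the same function up to a scalar: by $O(n-2)\times O(3)$-symmetry the paper's $u$ is also constant on $\mS^2$ and radial on $\mH^{n-2}_1$, and the regular radial solution of $L^{G_1}u=0$ is unique up to normalization. One small remark: since your $f$ is positive and strictly increasing it cannot decay, so the growing mode $e^{r/2}$ must be present; the spectral obstruction is an elegant alternative but not strictly necessary for that step.
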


Note that the function $u$ given by Proposition \ref{cexample}
satisfies Equation \eqref{eqyamabe} with $\mu=0$. 

\begin{proof}
Consider $\mS^{n-3}$ as a totally geodesic sphere in $\mS^n$. 
For $y \in \mS^n$ let $\Gamma_y$ be the Green's function of
$L^{\rho^n}$ at $y$. That is $\Gamma_y$ satisfies 
$L^{\rho^n} \Gamma_y = \delta_y$ in the sense of distributions, where
$\delta_y$ is the Dirac distribution at $y$. It is well known that
$\Gamma_y$ exists and satisfies
$\Gamma_y(x) \sim ( 4 (n-1) \om_{n-1} )^{-1} r(x)^{-(n-2)}$ when $x$
tends to $y$. Here $r(x)$ denotes the geodesic distance from $x$ to
$y$. Define $H$ on $\mS^n \setminus \mS^{n-3}$ by
\begin{equation*}
H(x) 
\definedas
\int_{\mS^{n-3}} \Gamma_y (x) \, dv^{\rho^{n-3}}(y).
\end{equation*}
It is straightforward to check that for $x$ tending to $\mS^{n-3}$ 
we have $H(x) \sim c_n' r'(x)^{-1}$ where $c_n'$ depends only on $n$ and
where $r'$ is the geodesic distance to $\mS^{n-3}$. Hence we have 
$H \in L^{p_n}(\mS^n \setminus \mS^{n-3})$ since $n\geq 7$. In 
\cite[Proposition 3.1]{ammann.dahl.humbert:p08a} it was proven that 
$\mS^n \setminus \mS^{n-3}$ and $\mH^{n-2}_1\times \mS^{2}$ are
conformal. Let $f$ be the conformal factor so that 
$G_1 = f^{\frac{4}{n-2}} \rho^n$. As explained in 
\cite{ammann.dahl.humbert:p08a}, $f(r') \sim (r')^{-\frac{2}{n-2}}$ when
$r'$ tends to $0$. We set $u \definedas f^{-1} H$. By conformal
covariance of the conformal Laplacian we have
\begin{equation*}
L^{G_1} u = 0.
\end{equation*}
Moreover, 
\begin{equation*}
\int_{\mH^{n-2}_1 \times \mS^{2}} u^{p_n} \, dv^{G_1} 
= \int_{\mS^n \setminus \mS^{n-3}} H^{p_n} \, dv^{\rho^n}
< \infty.
\end{equation*}
In addition, using the asymptotics of $f$ given above, it is easy to
check that $u$ is not in $L^2$ and hence provides the desired
counterexample.
\end{proof}


\section{Consequences for the surgery formula}
\label{sec.surg.app}

The goal of this paper is to find explicit lower bounds for
$\La_{n,k}$. We find the following.

\begin{corollary} \label{maincor}
Assume that $k \in \{ 2, \cdots, n-4\}$, then 
\begin{equation*}
\Lambda_{n,k} \geq \ul{\Lambda}_{n,k} 
\end{equation*}
where
\[
\ul{\Lambda}_{n,k} 
\definedas
\frac{n a_n}
{((k+1) a_{k+1})^{\frac{k+1}{n}} ((n-k-1) a_{n-k-1})^{\frac{n-k-1}{n}}}
\mu(\mS^{k+1})^{\frac{k+1}{n}} 
\mu(\mS^{n-k-1})^{\frac{n-k-1}{n}}.
\]
\end{corollary}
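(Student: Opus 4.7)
The plan is to combine three inputs: the definition of $\Lambda_{n,k}$ as $\min\{\Lambda^{(1)}_{n,k},\Lambda^{(2)}_{n,k}\}$, the two reduction inequalities reducing the modified constants to the ordinary Yamabe constant $\mu(\mM_c^{n,k})$, and a product estimate from \cite{ammann.dahl.humbert:p11} that bounds the Yamabe constant of a Riemannian product below in terms of the Yamabe constants of the factors. The expression for $\uLa_{n,k}$ already has the structure of such a product estimate applied to $\mM_c^{n,k}=\mH^{k+1}_c\times \mS^{n-k-1}$ with $\mu(\mH^{k+1}_c)$ replaced by $\mu(\mS^{k+1})$, which is what the argument will produce.

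\textbf{Step 1 (reduction to $\mu$).} The assumption $2\leq k\leq n-4$ is exactly what Corollary~\ref{coro1} requires, giving $\mu^{(2)}(\mM_c^{n,k})\geq \mu(\mM_c^{n,k})$ for every $c\in[-1,1]$. Combining this with \cite[Lemma~3.5]{ammann.dahl.humbert:p08a}, which yields $\mu^{(1)}(\mM_c^{n,k})\geq \mu(\mM_c^{n,k})$, and taking the infimum over $c$, I obtain
\[
\La_{n,k}
=\min\{\La^{(1)}_{n,k},\La^{(2)}_{n,k}\}
\geq \inf_{c\in[-1,1]} \mu(\mM_c^{n,k}).
\]

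\textbf{Step 2 (product estimate).} Both factors of $\mM_c^{n,k}$ have dimension at least $3$ (since $2\leq k\leq n-4$) and nonnegative Yamabe constant, so the hypotheses of the product-type inequality for Yamabe constants in \cite{ammann.dahl.humbert:p11} are satisfied. Applying that inequality with $n_1=k+1$ and $n_2=n-k-1$ gives
\[
\mu(\mM_c^{n,k})\;\geq\; \frac{n\,a_n}{((k+1)\,a_{k+1})^{(k+1)/n}\,((n-k-1)\,a_{n-k-1})^{(n-k-1)/n}}\,\mu(\mH^{k+1}_c)^{(k+1)/n}\,\mu(\mS^{n-k-1})^{(n-k-1)/n}.
\]

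\textbf{Step 3 (hyperbolic vs.\ spherical Yamabe constant).} The Yamabe constant is a conformal invariant, and for every $c>0$ the manifold $(\mH^{k+1}_c,\eta^{k+1}_c)$ is conformally equivalent to an open subset of the round sphere $(\mS^{k+1},\rho^{k+1})$. Since the Yamabe constant of an open conformal subdomain of the round sphere equals $\mu(\mS^{k+1})$ (compactly supported test functions match, and the sphere is the supremum in its conformal class), we obtain $\mu(\mH^{k+1}_c)=\mu(\mS^{k+1})$ independently of $c$. Substituting this into the product estimate, the right-hand side becomes $\uLa_{n,k}$ uniformly in $c$, and combining with Step~1 gives $\La_{n,k}\geq \uLa_{n,k}$.

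The main obstacle is locating the precise normalization of the product estimate in \cite{ammann.dahl.humbert:p11} and confirming that the identity $\mu(\mH^{k+1}_c)=\mu(\mS^{k+1})$ is not sensitive to completeness or non-compactness issues; both points are standard but need a careful citation. Everything else is bookkeeping around the definitions of $\La_{n,k}$, $\La^{(i)}_{n,k}$, and the exponents $p_n, a_n$.
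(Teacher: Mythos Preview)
Your proposal is correct and follows essentially the same route as the paper's proof: reduce $\La_{n,k}$ to $\inf_c \mu(\mM_c^{n,k})$ via Corollary~\ref{coro1} (together with \cite[Lemma~3.5]{ammann.dahl.humbert:p08a}), apply the product estimate \cite[Theorem~2.3]{ammann.dahl.humbert:p11}, and use $\mu(\mH^{k+1}_c)=\mu(\mS^{k+1})$ via conformal embedding into the sphere. The only cosmetic gap is that your Step~3 is stated for $c>0$ while the infimum runs over $c\in[-1,1]$; the case $c=0$ is handled identically since $\mR^{k+1}$ is conformal to $\mS^{k+1}$ minus a point, and $c<0$ follows from the symmetry $\mM^{n,k}_{-c}=\mM^{n,k}_c$.
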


Note that we have $\Lambda_{n,0} = \mu(\mS^n)$ from
\cite[Section 3.5]{ammann.dahl.humbert:p08a}, and hence the only cases
not covered by this corollary are $k = 1$ and $k = n-3$. Further, 
\[
\ul{\La}_{n,2} = \ul{\La}_{n,n-4} 
= 
n a_n \left(\frac{\pi^2}{12}\right)^{\frac{3}{n}} 
\left(\frac{\mu(\mS^{n-3})}{(n-3) a_{n-3}}\right)^{\frac{n-3}{n}}
=
n a_n\left(\frac{\pi^2}{12}\right)^{\frac{3}{n}} \nu_{n-3}^{1/n},
\]
where we defined
\[
\nu_\ell \definedas
\left( \frac{\mu(\mS^\ell)}{\ell a_\ell} \right)^\ell
= \om_\ell^2\left(\frac{\ell-2}{4}\right)^\ell
\]
and it holds that
\[
\om_\ell 
= \vol(\mS^\ell)
= \frac{2 \pi^{(\ell+1)/2}}{\Gamma\left(\frac{\ell+1}{2}\right)}.
\] 
We define 
\[
\ul{\La}_{n,\geqtwo} \definedas \min\{\ul{\La}_{n,2},\ldots,\ul{\La}_{n,n-4}\}.
\]
Some values for $\ul{\La}_{n,\geqtwo}$ are listed in Figure~\ref{fig.la.table}.
Numerically we calculated  $\ul{\La}_{n,\geqtwo} = \ul{\La}_{n,2}$ for 
$n\leq 3000$, and it seems reasonable to conjecture this for all $n$,
but we do not have a proof.

\begin{proof}[Proof of Corollary \ref{maincor}]
The conformal Yamabe invariant $\mu( \mM_c^{n,k})$ as
defined in Section \ref{Section_preliminaries} for non-compact
manifolds is, by virtue of \cite[Theorem 2.3]{ammann.dahl.humbert:p11}, 
bounded from below by $\ul{\Lambda}_{n,k}$. To see this we just have to 
notice that $\mu(\mH_c^{k+1}) = \mu(\mS^{k+1})$, which holds since 
$\mH_c^{k+1}$ is conformal to a subset of $\mS^{k+1}$. Thus 
Corollary~\ref{maincor} is a direct consequence of 
Corollary~\ref{coro1}. 
\end{proof}

\section{Topological applications}

In this section we derive some topological consequences of our main 
theorem. Recall that by definition a manifold $M$ is $k$-connected, 
$k\geq 1$, if it is connected and if 
$\pi_1(M)=\pi_2(M)=\dots=\pi_k(M)=0$.

\begin{proposition}
Let $M_0$ and $M_1$ be non-empty compact $2$-connected manifolds of 
dimension~$n\geq 7$, and assume that $M_0$ is spin bordant to $M_1$.
Then $M_1$ can be obtained from $M_0$ by a sequence of surgeries of 
dimensions $\ell$, $3\leq \ell\leq n-4$. 
\end{proposition}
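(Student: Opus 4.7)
The plan is to realize the given spin bordism as a handle decomposition in which only handles of indices $\ell + 1$ with $3 \leq \ell \leq n-4$ appear, since attaching such a handle to a cobordism effects precisely a surgery of dimension $\ell$ on the upper boundary. By hypothesis there exists a compact spin manifold $W^{n+1}$ with $\partial W = M_0 \sqcup (-M_1)$ realising the given spin bordism, and the strategy is first to sharpen $W$ via interior surgeries, and then to invoke Smale's handle trading theorem to restrict the indices of the handles.

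I would begin by performing interior spin surgeries on $W$ in dimensions $1$, $2$, and $3$ successively, killing $\pi_1(W)$, then $\pi_2(W)$, then $\pi_3(W)$. This procedure preserves $\partial W$ and produces a $3$-connected spin cobordism $W'$ from $M_0$ to $M_1$. Such surgeries are available because $\dim W = n+1 \geq 8$ leaves codimension at least $4$ for embedded surgery spheres with framings compatible with the spin structure. Combining the long exact sequence of the pair $(W', M_i)$ with the $2$-connectedness of $M_i$ and the $3$-connectedness of $W'$ yields $\pi_j(W', M_i) = 0$ for all $j \leq 3$ and $i \in \{0, 1\}$. Smale's handle trading theorem then applies, its dimensional hypothesis $\dim W' \geq 2 \cdot 3 + 2 = 8$ being exactly the assumption $n \geq 7$; it produces a Morse function $f \colon W' \to [0,1]$ with $f^{-1}(0) = M_0$, $f^{-1}(1) = M_1$, and no critical points of index $\leq 3$ or of index $\geq n-2$. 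Since passing through a critical point of index $\ell$ effects a surgery of dimension $\ell - 1$ on the level set, $M_1$ is obtained from $M_0$ by a sequence of surgeries of dimensions $\ell - 1$ with $3 \leq \ell - 1 \leq n - 4$, as required.

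The main obstacle is to run the handle trading simultaneously from both ends of $W'$: the low-index handles must be killed using the $3$-connectedness of $(W', M_0)$, while the high-index handles are killed via the dual decomposition using $(W', M_1)$, and one needs these two procedures not to reintroduce handles of the wrong index. This is precisely what the Whitney trick guarantees, and its applicability is exactly the content of the inequality $n+1 \geq 8$, so the hypothesis $n \geq 7$ is sharp for this argument. A secondary technical point is to verify at each interior surgery on $W$ that the spin structure extends across the trace, but this is routine since one always has the freedom to choose a spin-compatible framing.
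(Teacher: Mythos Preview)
Your argument is correct and follows the same route as the paper: make the bordism $3$-connected by interior surgeries, then use handle cancellation to restrict the handle indices to $\{4,\ldots,n-3\}$. The only differences are cosmetic---the paper includes an explicit $0$-dimensional surgery step to make $W$ connected before killing $\pi_1,\pi_2,\pi_3$, and it packages the handle cancellation by citing Kosinski's minimal-presentation theorem \cite[VIII Theorem~4.1]{kosinski:93} together with a Poincar\'e duality computation showing $b_i(W,M_0)=0$ for $i\in\{0,1,2,3,n-2,n-1,n,n+1\}$, rather than invoking Smale's handle trading from both ends via the vanishing of $\pi_j(W',M_i)$.
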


Note that $2$-connected manifolds are orientable and spin, and they 
carry a unique spin structure. 

The proposition is well-known, but for the sake of being self-contained 
we include a proof following the lines of~\cite[Lemma 4.2]{labbi:97}). 
As a first step we prove a lemma.

\begin{lemma}
Let $M_0$ and $M_1$ compact spin manifolds of dimension~$n\geq 7$
and assume that $M_0$ is spin bordant to $M_1$. Then there is a 
$3$-connected spin bordism~$W$ from $M_0$ to $M_1$.
\end{lemma}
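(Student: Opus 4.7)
The plan is to start with an arbitrary spin bordism $V$ from $M_0$ to $M_1$ (which exists by hypothesis) of dimension $n+1 \geq 8$, and to modify $V$ by a sequence of surgeries performed away from $\partial V$ so as to kill $\pi_i(V)$ for $i=0,1,2,3$ one at a time. Since the surgeries leave $\partial V = M_0 \sqcup M_1$ untouched and will be arranged so as to extend the spin structure, the result is the desired spin bordism. This is the standard ``surgery below the middle dimension'' construction, and the only nontrivial task is to check that the dimension $n+1 \geq 8$ is large enough for each of the four stages to work.

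At stage $i$, having already arranged that $V$ is $(i-1)$-connected, I would pick finitely many generators of $\pi_i(V)$ and represent each by an embedded sphere $S^i \hookrightarrow V$ disjoint from $\partial V$. Such an embedding exists by general position because $n+1 \geq 2i+1$ for $i \leq 3$ (using $n \geq 7$). The normal bundle is trivially framed for $i=0$, and for $1 \leq i \leq 3$ it is an oriented spin bundle of rank $n+1-i \geq 5$ over $S^i$, hence classified by $\pi_{i-1}(\Spin(n+1-i))$. Since $\pi_0$, $\pi_1$, and $\pi_2$ of $\Spin(m)$ vanish for $m \geq 3$, this classifying group is zero, so the normal bundle carries a canonical spin framing. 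Surgery along this framed sphere replaces a tubular neighbourhood $S^i \times D^{n+1-i}$ by $D^{i+1} \times S^{n-i}$, producing a new spin manifold with unchanged boundary.

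A standard van Kampen / Mayer--Vietoris argument then shows that such a surgery kills the chosen class in $\pi_i(V)$ without altering $\pi_j(V)$ for $j < i$: the sphere $S^{n-i}$ of the glued-in handle is $(n-i-1)$-connected with $n-i \geq n-3 \geq 4 > i$. Finitely many such surgeries thus render $\pi_i(V)$ trivial, after which one passes to stage $i+1$. Completing stage $i=3$ produces the desired $3$-connected spin bordism $W$.

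The main obstacle is the arithmetic bookkeeping rather than any single deep fact: each stage simultaneously requires $n+1 \geq 2i+1$ (to embed $S^i$), $n+1-i \geq 3$ (for spin triviality of the normal bundle), and $n-i > i$ (so that the surgery does not disturb lower homotopy). All three inequalities hold for $i \leq 3$ precisely when $n \geq 7$, which is exactly the hypothesis of the lemma; this dimensional threshold is the pinch point of the argument.
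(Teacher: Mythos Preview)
Your proposal is correct and follows essentially the same route as the paper: start from an arbitrary spin bordism and perform interior surgeries in dimensions $0,1,2,3$ to kill $\pi_0,\pi_1,\pi_2,\pi_3$ in turn, checking at each stage that the embedding, normal-bundle triviality, and preservation of lower homotopy all go through because $n\geq 7$. The only place where the paper is a shade more explicit is the $i=1$ step, where it notes that among the two homotopy classes of framings one must pick the one for which the spin structure extends across the attached $D^2\times S^{n-1}$; your phrase ``canonical spin framing'' covers this but the point deserves a word since $S^1\times S^{n-1}$ has two spin structures.
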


\begin{proof}[Proof of the Lemma]
We start with a given spin bordism $W_0$ from $M_0$ to $M_1$. From 
this bordism we construct a bordism $W$ which is $3$-connected.

By performing $0$-dimensional surgeries at the bordism, one can 
modify the original bordism $W_0$ to be connected. This can be done 
such that the bordism $W_1$ thus obtained is again orientable, and 
$W_1$ then carries a spin structure.

We now perform $1$-dimensional surgeries to reduce the fundamental 
group to the trivial group. Assume that $[\ga] \in \pi_1(W_1)$. 
We can assume that $\gamma: S^1 \to W_1$ is an embedding. Its normal 
bundle is trivial as $W_1$ is orientable. Performing a $1$-dimensional 
surgery along $\gamma$ using a trivialization $\nu$ of this normal 
bundle yields a new bordism $W_1^{\ga,\nu}$ which depends both on 
$\ga$ and $\nu$. This bordism is orientable. For any $\ga$ one can 
choose a trivialization $\nu$ such that the bordism $W_1^{\ga,\nu}$ 
carries a spin structure that coincides with the spin structure of 
$W_1$ outside a tubular neighborhood of the image of $\gamma$. 
The van Kampen Theorem gives a surjective homomorphism 
$\pi_1(W_1) \to \pi_1(W_1^{\ga,\nu})$ such that $[\gamma]$ is in the 
kernel. The fundamental group $\pi_1(W)$ is finitely generated, 
let $\ga_i$ be disjoint embedded circles such that 
$[\gamma_1], \ldots, [\gamma_\ell]$ is a set of generators $\pi_1(W)$. 
Performing $1$-dimensional surgeries along the $\ga_i$ with suitable 
trivializations of their normal bundles then yields a simply-connected 
spin bordism $W_2$ from $M_0$ to $M_1$.

Next we perform $2$-dimensional surgeries to remove $\pi_2(W_2)$. 
Assume that $[\sigma] \in \pi_2(W_2)$ is given and assume that 
$\sigma: S^2 \to W_2$ is an embedding. Since $W_2$ is spin the normal 
bundle of the image of $\sigma$ is trivial. Performing a $2$-dimensional 
surgery along $\sigma$ yields a new spin bordism $W_2^\si$ which depends 
on the choice of $\si$. However, it is independent of the choice of 
trivialization as different trivializations are homotopic. 
After a finite number of $2$-dimensional surgeries we obtain a 
$2$-connected spin bordism $W_3$ from $M_0$ to $M_1$. 

In a similar way one can also remove $\pi_3(W_3)$. 
The Whitney embedding theorem implies that any class in $\pi_3(W_3)$ 
can be represented by an embedding $\tau:S^3\to W_3$, as $n\geq 6$.
The normal bundle of the image of $\tau$
is trivial, as $\pi_2(O(n-3))=0$. A surgery along $\tau$ with any 
trivialization $\nu$ will then kill $[\tau]$, and 
since $n\geq 7$ the spin bordism~$W_3^{\tau,\nu}$ thus obtained is again 
$2$-connected and 
will have $\pi_3(W_3^{\tau,\nu})\cong \pi_3(W_3)/[\tau]$.
After finitely many surgery steps we obtain a $3$-connected bordism $W$ 
as claimed in the lemma.
\end{proof}

\begin{proof}[Proof of the Proposition]
Assume that $W$ is $3$-connected spin bordism from $M_0$ to $M_1$.
Then $H_i(W,M_j)=0$ for $i=0,1,2,3$, in particular $b_i(W,M_0)=0$ for
these numbers $i$.
We can apply \cite[VIII Theorem~4.1]{kosinski:93} for $k=4$ and $m=n+1$.
One obtains that there is a presentation of the bordism such that for any $i<4$
and any $i>n-3$ the number of $i$-handles is given by $b_i(W,M_0)$. 
Any $i$-handle corresponds to a surgery of dimension $i-1$. 
It remains to show that $b_i(W,M_0)=0$ for $i\in\{0,1,2,3,n+1,n,n-1,n-2\}$.
For $i\in \{0,1,2,3\}$ this was discussed above. By Poincar\'e duality 
$H^{n+1-i}(W,M_0)$ is dual to $H_i(W,M_1)$ which vanishes for $i=0,1,2,3$.
One the other hand the universal coefficient theorem tells us that 
the free parts of $H^i(W,M_0)$ and $H_i(W,M_0)$ are isomorphic. 
Thus $b_i(W,M_0)$ which is by definition the rank of (the free part of) 
$H_i(W,M_0)$ vanishes for $i\in\{n+1,n,n-1,n-2\}$.
\end{proof}

\begin{corollary} \label{app.2-conn}
Let $M$ be a $2$-connected compact manifold of dimension $n\geq 7$
which is a spin boundary. Then 
\[
\si(M) \geq \ul{\La}_{n,\geqtwo}
\]
where $\ul{\La}_{n,\geqtwo}$ is defined in Section \ref{sec.surg.app}.
\end{corollary}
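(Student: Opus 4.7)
The plan is to reduce the statement to the sphere via the surgery formula. Since $M$ is a spin boundary there is a compact spin manifold $W$ with $\partial W = M$. Removing an open $(n+1)$-disk from the interior of $W$ gives a compact spin manifold whose boundary is the disjoint union $M\sqcup S^n$, that is, a spin bordism from $M$ to $S^n$. Because $n\geq 7$, the sphere $S^n$ is $2$-connected, so the hypotheses of the preceding Proposition are satisfied (with $M_0=S^n$ and $M_1=M$). It therefore yields a finite chain
\[
S^n = N_0, N_1, \ldots, N_r = M
\]
of compact manifolds such that each $N_j$ is obtained from $N_{j-1}$ by a surgery of some dimension $\ell_j$ with $3\leq \ell_j \leq n-4$.

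Next I would iterate the surgery inequality \eqref{eq.sigma.surg}. Applied at each stage it gives
\[
\si(N_j) \;\geq\; \min\bigl(\si(N_{j-1}),\, \La_{n,\ell_j}\bigr).
\]
Since $\ell_j\in\{3,\ldots,n-4\}\subset\{2,\ldots,n-4\}$, Corollary \ref{maincor} yields $\La_{n,\ell_j} \geq \ul{\La}_{n,\ell_j} \geq \ul{\La}_{n,\geqtwo}$. By a straightforward induction on $j$, as long as $\si(N_{j-1})\geq \ul{\La}_{n,\geqtwo}$ the same lower bound persists for $\si(N_j)$. Hence
\[
\si(M) \;\geq\; \min\bigl(\si(S^n),\, \ul{\La}_{n,\geqtwo}\bigr).
\]

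To finish, it suffices to verify $\si(S^n)=\mu(\mS^n) = n(n-1)\om_n^{2/n} \geq \ul{\La}_{n,\geqtwo}$. Both sides are explicit in terms of the volumes $\om_\ell$ of round spheres (equivalently, values of the Gamma function) and of the constants $a_\ell$, so this reduces to a closed-form numerical comparison in each dimension $n\geq 7$. I expect this to be the main obstacle: the inequality is intuitively clear because $\si(S^n)$ is the largest known Yamabe invariant in each dimension, but a clean dimension-uniform argument seems to require estimating $\om_n$ against $\om_{k+1}^{(k+1)/n}\om_{n-k-1}^{(n-k-1)/n}$ for $k=2$ (and $k=n-4$, which by the explicit form in Section \ref{sec.surg.app} gives the same value). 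Granting this comparison, the minimum on the right equals $\ul{\La}_{n,\geqtwo}$ and the corollary follows.
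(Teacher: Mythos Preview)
Your argument follows the paper's proof almost verbatim: remove a ball from $W$ to get a spin bordism $S^n\to M$, invoke the preceding Proposition to obtain $M$ from $S^n$ by surgeries of dimensions $3\leq\ell\leq n-4$, and iterate the surgery inequality together with Corollary~\ref{maincor}.

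The only discrepancy is that you flag $\si(S^n)\geq\ul{\La}_{n,\geqtwo}$ as the ``main obstacle'' requiring a delicate Gamma-function comparison. In fact this step is immediate and the paper does not even comment on it. By construction $\ul{\La}_{n,k}$ is a \emph{lower} bound for $\mu(\mM_c^{n,k})$ (this is how Corollary~\ref{maincor} is proved), while the classical Aubin inequality gives $\mu(N,h)\leq\mu(\mS^n)$ for every Riemannian manifold $(N,h)$, compact or not (use test functions concentrating at a point). Hence
\[
\ul{\La}_{n,\geqtwo}\;\leq\;\ul{\La}_{n,k}\;\leq\;\mu(\mM_c^{n,k})\;\leq\;\mu(\mS^n)\;=\;\si(S^n),
\]
so $\min(\si(S^n),\ul{\La}_{n,\geqtwo})=\ul{\La}_{n,\geqtwo}$ with no further work. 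With this remark your proof is complete and identical in substance to the paper's.
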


\begin{proof}
Assume that $M$ is the boundary of a compact spin manifold $W$ of dimension
$n+1$. By removing a ball we obtain a spin-bordism from $S^n$ to $M$. 
The preceding proposition tells us that $M$ can be obtained by surgeries
of dimensions $\ell\in\{3,\ldots,n-4\}$ from $S^n$.
By applying the surgery formula \eqref{eq.sigma.surg} and 
Corollary \ref{maincor} we get the stated lower bound for $\si(M)$. 
\end{proof}

\begin{theorem}[{Stolz \cite[Theorem~B]{stolz:92}}]
Let $M$ be a compact spin manifold of dimension $n\geq 5$. Assume 
that the index $\al(M) \in KO_n(pt)$ vanishes. Then $M$ is spin-bordant 
to the total space of an $\mH P^2$-bundle over a base $Q$ for which the 
structure group is $\PSp(3)$.
\end{theorem}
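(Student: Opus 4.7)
The plan is to prove this via a transfer construction. Let $M$ be a compact spin manifold of dimension $n\geq 5$ with $\al(M)=0$; the goal is to realize $[M]\in\Om_n^{\mathrm{Spin}}$ in the image of the ``total-space'' transfer
\[
T\colon \Om_n^{\mathrm{Spin}}(B\PSp(3)) \to \Om_n^{\mathrm{Spin}}
\]
sending $[Q,f]$, where $f\colon Q\to B\PSp(3)$ classifies a principal bundle $P\to Q$, to the bordism class of the total space $P\times_{\PSp(3)} \mH P^2$ of the associated $\mH P^2$-bundle. Since $\mH P^2$ is $3$-connected and $\PSp(3)$ acts by spin-structure-preserving diffeomorphisms, a spin structure on $Q$ induces a canonical one on the total space, so $T$ is well-defined.

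Next I would check that $\mathrm{im}\,T \subset \ker\al$. The symmetric metric on $\mH P^2$ is $\PSp(3)$-invariant and has positive scalar curvature, so the Vilms--Besse construction equips every $\mH P^2$-bundle with a Riemannian submersion metric whose fibers have psc; a standard O'Neill-type scaling argument then produces a psc metric on the total space. By the Lichnerowicz--Hitchin theorem, $\al$ vanishes on manifolds admitting psc, proving the inclusion.

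The core of the argument, and by far the hardest step, is the reverse inclusion $\ker\al \subset \mathrm{im}\,T$. The plan is to localize at each prime. At the prime $2$, one invokes the Anderson--Brown--Peterson splitting of $\mathrm{MSpin}_{(2)}$ as a wedge of suspensions of $ko$, $ko\langle 2 \rangle$, and mod-$2$ Eilenberg--MacLane spectra. The invariant $\al$ corresponds to projection onto the unique $ko$-summand carrying the Thom class, so $(\ker\al)_{(2)}$ is the direct sum of the remaining wedge summands. One must show that $T$ hits each of them, which proceeds by computing the induced map on $KO$-Pontryagin and Stiefel--Whitney numbers and by constructing explicit $\mH P^2$-bundles---over products and over carefully chosen bases in $\Om_*^{\mathrm{Spin}}(B\PSp(3))$---that realize every indecomposable generator. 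At odd primes $\mathrm{MSpin}_{(p)}\simeq \mathrm{MSO}_{(p)}$ and $\al$ vanishes, so the task reduces to showing that $T$ surjects rationally and onto each $BP$-summand; this in turn follows from elementary computations with $H^*(B\PSp(3);\mZ_{(p)})$ and the Pontryagin numbers of $\mH P^2$-bundles.

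The main obstacle is the $2$-local surjectivity: a detailed characteristic-number bookkeeping using the mod-$2$ cohomology of $B\PSp(3)$ and the $KO$-theoretic structure of $\Om_*^{\mathrm{Spin}}$ provided by Anderson--Brown--Peterson. This bookkeeping is the technical heart of Stolz's proof and is essentially unavoidable without a substantially new idea.
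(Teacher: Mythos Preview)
The paper does not give a proof of this statement at all: it is quoted verbatim as a result of Stolz and simply cited as \cite[Theorem~B]{stolz:92}. There is therefore no ``paper's own proof'' to compare against; the authors use Stolz's theorem as a black box.

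Your proposal is not a proof of the theorem but a plausible high-level outline of Stolz's own argument, and as such it is broadly accurate. The transfer map $T\colon \Om_*^{\mathrm{Spin}}(B\PSp(3))\to \Om_*^{\mathrm{Spin}}$, the easy inclusion $\mathrm{im}\,T\subset\ker\al$ via fiberwise positive scalar curvature and the Lichnerowicz formula, and the identification of the hard step as the $2$-local surjectivity are all correct features of Stolz's proof. One point of imprecision: the $2$-local argument in \cite{stolz:92} is not carried out via characteristic-number bookkeeping against the Anderson--Brown--Peterson splitting in the way you describe. Stolz instead works in the Adams spectral sequence, using Margolis homology to show that the relevant map of $E_2$-terms is surjective; the ABP splitting is in the background but is not the organizing device for the computation. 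Your sketch would not, as written, constitute a proof---it names the shape of the argument but omits exactly the technical core (the Adams spectral sequence / Margolis homology calculation) that makes Stolz's paper substantial. For the purposes of the present paper this is immaterial, since the theorem is only being cited.
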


The base $Q$ has to be understood as a spin manifold, so that it admits
a spin structure and the choice of spin structure matters. The theorem 
includes the fact the every spin manifold of dimension $5$, $6$, or $7$ 
is a spin boundary, in these cases $Q = \emptyset$. 

\begin{proposition}[Extended Stolz theorem]
In the case $n \geq 9$ one can assume that $Q$ is connected, and in 
the case $n \geq 11$ one can assume that it is simply connected.
\end{proposition}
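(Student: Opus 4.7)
The plan is to begin with an $\mH P^2$-bundle $E \to Q$ as provided by Stolz's theorem (so that $E$ is spin-bordant to $M$) and to modify the base $Q$ by low-dimensional surgeries until it is connected, respectively simply connected, while simultaneously extending the $\PSp(3)$-bundle structure across the trace of each surgery. The total space of the extended bundle will then furnish a spin bordism between $E$ and a new $\mH P^2$-bundle total space $E'$, so that $M$ remains spin-bordant to $E'$, now with the desired connectivity of the base.

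For the first assertion, $n \geq 9$ gives $\dim Q = n - 8 \geq 1$. If $Q$ is not connected, I pick points in two distinct components and perform a $0$-dimensional surgery on $Q$; the trace $W$ is obtained by attaching a $1$-handle to $Q \times I$ and is a spin bordism from $Q$ to a new base $Q'$ with one fewer component. The classifying map $Q \to B\PSp(3)$ extends over this $1$-handle since the obstruction lies in $\pi_0(B\PSp(3)) = 0$: the group $\PSp(3) = Sp(3)/\{\pm I\}$ is connected as a quotient of the connected group $Sp(3)$ by a central discrete subgroup. Hence the $\mH P^2$-bundle extends over $W$, and iterating reduces the number of components of the base to one.

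For the second assertion, $n \geq 11$ gives $\dim Q \geq 3$, so after the previous step I may assume $Q$ is connected. The group $\pi_1(Q)$ is finitely generated; as in the proof of the preceding lemma, I represent generators by pairwise disjoint embedded circles with trivial normal bundles (available since $Q$ is oriented and $\dim Q - 1 \geq 2$) and perform $1$-dimensional surgeries along them. The classifying map extends over each attached $2$-handle because the obstruction now lies in $\pi_1(B\PSp(3)) = \pi_0(\PSp(3)) = 0$. A suitable framing makes the trace a spin bordism of bases, and van Kampen implies that each such surgery kills a generator of $\pi_1$. Finitely many iterations yield a simply connected base.

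The main technical point I expect to have to verify is compatibility of spin structures: at every step one must check that the extended $\mH P^2$-bundle over the trace is itself spin and that its spin structure restricts at the two ends to the given spin structures on $E$ and on the newly constructed $E'$. This is handled by the fact that $\mH P^2$ is simply connected and spin, so that a spin structure on the base together with the $\PSp(3)$-structure of the bundle determines a unique compatible spin structure on the total space; one then arranges the framings of the surgery cores on $Q$ so that $Q$ stays spin at each step and the total space bordisms obtained in this way are genuinely spin bordisms of $\mH P^2$-bundle total spaces.
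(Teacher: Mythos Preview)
Your proof is correct and follows essentially the same strategy as the paper's: perform $0$- and $1$-dimensional surgeries on the base $Q$, extend the $\PSp(3)$-bundle over the trace of each surgery, and choose framings so that the bordism of bases (and hence of total spaces) is spin. One small remark: the vanishing of $\pi_0(B\PSp(3))$ in your $0$-surgery step needs no special argument since $BG$ is always path-connected; the connectedness of $\PSp(3)$ is what matters for extending over the $2$-handles in the $1$-surgery step, exactly as you say, and the paper likewise invokes it only there.
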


Note that $M \definedas \mH P^2 \amalg \mH P^2$ is an $8$-dimensional 
example where $Q$ cannot be chosen to be connected. 
This follows from the fact that $\mH P^2$ has non-vanishing 
signature and thus $[\mH P^2]$ is an element of infinite order in 
$\Omega_8^{\rm spin}$.
If $S^1$ carries the spin structure that does not bound a disc, 
then $\mH P^2\times S^1$ and $\mH P^2 \times S^1 \times S^1$ are 
examples of dimension $9$ and 
$10$ where $Q$ cannot be chosen to be simply connected.
This is a consequence of the fact 
that  $[\mH P^2\times S^1]\in \Omega_9^{\rm spin}$ and 
$[\mH P^2 \times S^1 \times S^1]\in \Omega_{10}^{\rm spin}$ are non-zero 
elements (of order $2$), see
\cite[Cor.~1.9]{anderson.brown.peterson:66} or 
\cite[Cor.~2.6]{anderson.brown.peterson:67}.

\begin{proof}
Assume that $M$ is spin bordant to a spin manifold $N_0$ which is the 
total space of a fiber bundle with fiber $\mH P^2$ and structure group 
$\PSp(3)$ over a base $Q_0$ of dimension $n-8 \geq 1$. By performing 
$0$-dimensional surgery on $Q_0$ we obtain a connected space $Q_1$. 
The spin bordism from $Q_0$ to $Q_1$ which is associated to the 
$0$-dimensional surgeries yields a spin bordism from $N_0$ to a total 
space of a fiber bundle with fiber $\mH P^2$ and structure group 
$\PSp(3)$ over $Q$. This $Q$ is connected, but not necessarily 
simply-connected.

Now assume $n \geq 11$. Any path $\gamma:S^1 \to Q_1$ is homotopic 
to an embedding and has a trivial normal bundle as $Q_1$ is 
orientable. A tubular neighborhood of the image of $\gamma$ in $Q_1$ 
is diffeomorphic to $S^1 \times B^{n-9}$. Any trivialization of this 
normal bundle yields the germ of such a diffeomorphism up to isotopy. 
Because of our condition $n\geq 11$ we can choose the trivialization of
the normal bundle such that the induced spin structure on 
$S^1 \times B^{n-9}$ is the bounding spin structure of $B^2 \times B^{n-9}$. 
Doing a surgery along $\gamma$ with respect to such a trivialization 
we obtain a spin manifold $Q_2$, and the associated bordism from $Q_1$ 
to $Q_2$ is a spin bordism. As $\PSp(3)$ is connected the 
$\mH P^2$-bundle with structure group $\PSp(3)$ extends to a 
bundle of the same type over this bordism.

We now perform a sequence of such $1$-dimensional surgeries, where 
$\gamma$ runs through a generating set of $\pi_1(Q_1)$. The space thus
obtained is then simply-connected.
\end{proof}

Combining the previous results we obtain the following.

\begin{corollary}
Let $M$ be a $2$-connected compact manifold of dimension $n=7$. Then
\[
\si(M)\geq \ul{\La}_{7,\geqtwo}> 74.5.
\]
\end{corollary}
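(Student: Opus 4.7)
The plan has two parts: a brief topological reduction and a closed-form numerical evaluation of $\ul{\La}_{7,\geqtwo}$.

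For the topology, a $2$-connected compact $7$-manifold $M$ is automatically simply connected and carries a unique spin structure. The index $\al(M)$ lives in $KO_7(pt)=0$ and so vanishes, placing $M$ within the scope of Stolz's theorem in the special dimension $n=7$ recalled above, where the base $Q$ is empty. Thus $M$ is a spin boundary, and Corollary~\ref{app.2-conn} yields $\si(M)\geq\ul{\La}_{7,\geqtwo}$.

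For the constant, note first that the formula defining $\ul{\La}_{n,k}$ is visibly symmetric under $k\leftrightarrow n-k-2$, so $\ul{\La}_{7,2}=\ul{\La}_{7,3}$ and hence $\ul{\La}_{7,\geqtwo}=\ul{\La}_{7,2}$. I then substitute $a_3=8$, $a_4=6$, $a_7=24/5$ into the defining formula: the denominators telescope as $(3a_3)^{3/7}(4a_4)^{4/7}=24^{3/7}\cdot 24^{4/7}=24$, which combines with $7a_7=168/5$ to produce the prefactor $7/5$. Using $\mu(\mS^3)=6(2\pi^2)^{2/3}$ and $\mu(\mS^4)=12(8\pi^2/3)^{1/2}$, one finds $\mu(\mS^3)^3\mu(\mS^4)^4=864\cdot 147456\cdot \pi^8=2^{19}\cdot 3^5\cdot \pi^8$, giving the closed form
\[
\ul{\La}_{7,2}=\tfrac{7}{5}\cdot 2^{19/7}\cdot 3^{5/7}\cdot \pi^{8/7}.
\]

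Raising to the seventh power, the inequality $\ul{\La}_{7,2}>74.5$ reduces to the elementary numerical bound $7^7\cdot 2^{19}\cdot 3^5\cdot \pi^8>5^7\cdot 74.5^7$, which follows from any standard estimate such as $\pi^8>9488.5$. The main obstacle is precisely the tightness of this final inequality: one finds $\ul{\La}_{7,2}\approx 74.505$, so there is only about $0.005$ of slack, and $\pi^8$ must be evaluated with enough accuracy to preserve this small margin.
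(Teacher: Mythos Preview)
Your proof is correct and follows the same route the paper implicitly uses: the corollary is stated without proof there, since the remark after Stolz's theorem already notes that every spin manifold of dimension $7$ is a spin boundary, after which Corollary~\ref{app.2-conn} and the tabulated value $\ul{\La}_{7,\geqtwo}=74.50435\ldots$ finish the job. The only addition you make is to derive the closed form $\ul{\La}_{7,2}=\tfrac{7}{5}\cdot 2^{19/7}\cdot 3^{5/7}\cdot \pi^{8/7}$ and verify the bound by hand rather than by a single numerical evaluation; this is a welcome check but not a different method.
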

To derive a similar result for $n=8$, we remark that the conformal 
Yamabe constant of $\mH P^2$, equipped with the standard metric, is 
$128\pi/120^{1/4} = 121.4967... > \ul{\La}_{8,\geqtwo} = 92.24278...$.
 
\begin{corollary}\label{app.van.ind}
Let $M$ be a $2$-connected compact manifold of dimension $n=8$.
Then $\si(M)  =  0 $ if $\al(M)\neq 0$, and 
 \[\si(M) \geq \ul{\La}_{8,\geqtwo} > 92.2\] 
if $\al(M)= 0$.
%
\end{corollary}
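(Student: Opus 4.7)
The plan is to split according to whether $\al(M)$ vanishes, and handle each case with a mix of Hitchin/Petean obstruction results and the surgery strategy already in use in this section.

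When $\al(M)\neq 0$, I first establish the upper bound $\si(M)\leq 0$: Hitchin's obstruction for positive scalar curvature prevents $\mu(M,g)$ from being positive for any Riemannian metric $g$ on $M$. For the matching lower bound $\si(M)\geq 0$ I would invoke Petean's theorem, which gives $\si\geq 0$ for every simply-connected closed manifold of dimension at least $5$; this applies because $M$ is in particular simply-connected and $n=8\geq 5$. Together these yield $\si(M)=0$.

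When $\al(M)=0$, I apply the extended Stolz theorem proved above. In dimension $n=8$ the base $Q$ of the $\mH P^2$-bundle has dimension $0$ and is therefore a finite discrete set, so the total space reduces to a disjoint union $k\cdot\mH P^2$ of $k\geq 0$ copies of $\mH P^2$. If $k=0$, then $M$ is a spin boundary and Corollary~\ref{app.2-conn} applies directly. Otherwise I form the iterated connected sum $N\definedas \mH P^2\#\cdots\#\mH P^2$ ($k$ summands), which is $2$-connected (since $\mH P^2$ is $3$-connected and $n\geq 5$) and which is spin-bordant to $k\cdot\mH P^2$, hence to $M$, through $k-1$ surgeries of dimension $0$. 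The first proposition of the present section then exhibits $M$ as obtained from $N$ by surgeries of dimensions in $\{3,4\}$. Combining \eqref{eq.sigma.surg}, Corollary~\ref{maincor}, and an induction on $k$ that uses the $0$-surgery formula with $\La_{8,0}=\mu(\mS^8)$ at each connected-sum step, one obtains
\[
\si(M)\;\geq\;\min\bigl(\si(\mH P^2),\,\mu(\mS^8),\,\La_{8,3},\,\La_{8,4}\bigr).
\]
Since $\si(\mH P^2)>121$ (noted just before the corollary), $\mu(\mS^8)=56\,\om_8^{1/4}\approx 131$, and $\La_{8,3},\La_{8,4}\geq \ul{\La}_{8,\geqtwo}\approx 92.24$ by Corollary~\ref{maincor}, all four terms exceed $\ul{\La}_{8,\geqtwo}$, and the claimed bound follows.

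The main obstacle lies in the $\al(M)\neq 0$ half: surgery preserves the spin bordism class, while $\al(\mH P^2)=\al(S^8)=0$, so $M$ cannot be reached by surgery from these simple models and the clean bordism-and-surgery argument used when $\al(M)=0$ breaks down. One is therefore forced either to invoke an external positivity result such as Petean's theorem, or else to exhibit within the bordism class of $M$ a $2$-connected $8$-dimensional model with $\al\neq 0$ and $\si\geq 0$ (for instance a $\Spin(7)$-holonomy Joyce/Bott manifold, which is Ricci-flat and hence scalar-flat) and then rerun the surgery argument from such a model.
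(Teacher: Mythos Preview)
Your argument is correct and matches the approach the paper implicitly intends: the remark immediately preceding the corollary, that $\mu(\mH P^2)=121.49\ldots>\ul{\La}_{8,\geqtwo}$, is precisely the extra ingredient needed to run the surgery argument through the Stolz model $k\cdot\mH P^2$ in dimension~$8$, exactly as you do. The case $\al(M)\neq 0$ indeed requires the Lichnerowicz--Hitchin obstruction together with Petean's non-negativity theorem, which the paper leaves implicit.
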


\begin{proposition}
Let $M_0$ be the total space of a bundle with fiber $\mH P^2$ and
structure group $\PSp (3)$ over a base $B$ of dimension $n-8$. 
Then, if $n\geq 11$ 
\[
\si(M_0) \geq 
\ul{\la}_n 
\definedas 
n a_n \left(\frac{3^6 2^{18}}{7^8 5^2}\pi^8\right)^{1/n} \nu_{n-8}^{1/n}
\]
\end{proposition}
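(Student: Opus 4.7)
The plan is to iterate the surgery formula~\eqref{eq.sigma.surg} so as to reduce the lower bound on $\si(M_0)$ to $\si(\mH P^2\times \mS^{n-8})$, and then to bound the latter by applying the Yamabe product inequality from \cite[Theorem~2.3]{ammann.dahl.humbert:p11} with an $\mH P^2$ factor in place of the $\mS^{k+1}$ factor that appears in the proof of Corollary~\ref{maincor}. By design, $\ul{\la}_n$ is precisely the output of that product inequality for the pair $(\mH P^2,\mS^{n-8})$.

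For the surgery reduction I would connect $M_0$ to the trivial bundle $\mH P^2\times\mS^{n-8}$ by a chain of surgeries of admissible dimensions. First, $0$- and $1$-dimensional surgeries on $B$ render it connected and simply-connected; this uses $n\geq 11$, so that $\dim B\geq 3$. Then, Kosinski's handle theorem applied to a suitably-connected spin bordism from $B$ to $\mS^{n-8}$ produces a handle presentation of this bordism using only handles of intermediate dimensions, say $\ell+1$ with $\ell\in\{3,\ldots,n-12\}$. Since $\PSp(3)$ is connected and each handle is contractible, the bundle trivializes over every handle, so each base-handle lifts to a thickened handle $\mH P^2\times D^{\ell+1}\times D^{n-8-\ell}$ in a bordism from $M_0$ to $\mH P^2\times\mS^{n-8}$. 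Using the standard cell decomposition $\mH P^2=e^0\cup e^4\cup e^8$, this thickened handle decomposes into ordinary handles of dimensions $\ell+1$, $\ell+5$, $\ell+9$, which translate into three consecutive surgeries on $M_0$ of dimensions $\ell$, $\ell+4$, $\ell+8$, all lying in $\{3,\ldots,n-4\}\subset\{2,\ldots,n-3\}$ and hence admissible in both directions of the surgery formula. Iterating \eqref{eq.sigma.surg} along this chain yields
\[
\si(M_0)\ \geq\ \min\Bigl(\si(\mH P^2\times \mS^{n-8}),\ \min_k \La_{n,k}\Bigr),
\]
with $k$ running over the surgery dimensions that have appeared.

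For the first term I apply the Yamabe product inequality to the factors $(\mH P^2,g_{\rm std})$ and $(\mS^{n-8},\rho^{n-8})$, obtaining
\[
\mu(\mH P^2\times \mS^{n-8})\ \geq\ n\,a_n\Bigl(\tfrac{\mu(\mH P^2)}{8a_8}\Bigr)^{8/n}\Bigl(\tfrac{\mu(\mS^{n-8})}{(n-8)a_{n-8}}\Bigr)^{(n-8)/n};
\]
substituting $\mu(\mH P^2)=128\pi/120^{1/4}$ (the value noted just before Corollary~\ref{app.van.ind}) together with the standard expression for $\nu_{n-8}$ rewrites the right-hand side as exactly $\ul{\la}_n$. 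For the surgery terms I invoke Corollary~\ref{maincor} to obtain $\La_{n,k}\geq \ul{\La}_{n,k}$, and then verify the numerical comparison $\ul{\La}_{n,k}\geq \ul{\la}_n$ for each $k$ appearing in the chain (at $k=7$ this reduces to $\mu(\mS^8)\geq \mu(\mH P^2)$, which holds, and the other relevant $k$ reduce to analogous comparisons of sphere Yamabe constants). The main obstacle will be the surgery-lifting step: one must confirm that Kosinski's handle presentation of the base bordism can be arranged within the restricted range $\{3,\ldots,n-12\}$ of base-surgery dimensions, which is tight at the lower end $n=11$ and requires an additional direct argument there (using that $\pi_i(\PSp(3))$ vanishes for small $i$ together with Poincar\'e-type classification of simply-connected low-dimensional bases), and one must dispose of any residual non-triviality of the resulting $\mH P^2$-bundle over $\mS^{n-8}$, classified by $\pi_{n-9}(\PSp(3))$, by further bundle-compatible surgeries before reaching the genuine product model.
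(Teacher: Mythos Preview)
Your surgery-reduction strategy has a fundamental obstruction: the numerical comparison $\ul{\La}_{n,k}\geq \ula_n$ that you need for every surgery dimension $k$ in your chain is \emph{false} for $n\geq 13$. By the symmetry $\ul{\La}_{n,k}=\ul{\La}_{n,n-k-2}$ one has $\ul{\La}_{n,n-4}=\ul{\La}_{n,2}=\ul{\La}_{n,\geqtwo}$, and the value $k=n-4$ lies in your range $\{3,\ldots,n-4\}$ (it arises as $\ell+8$ with $\ell=n-12$). But Figure~\ref{fig.la.table} shows $\ul{\La}_{n,\geqtwo}<\ula_n$ for all $n\geq 13$; in fact the paper establishes and exploits precisely this inequality in the very next proposition. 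Hence your argument yields at best $\si(M_0)\geq\min(\ula_n,\ul{\La}_{n,\geqtwo})=\ul{\La}_{n,\geqtwo}$, strictly weaker than the claim. There are further gaps: nothing in the hypotheses says that $B$ is spin or that it is spin-bordant to $\mS^{n-8}$; your initial $0$- and $1$-surgeries on $B$ lift, via the cell decomposition of $\mH P^2$, to surgeries on $M_0$ of dimensions including $k=1$ (not covered by Corollary~\ref{maincor}) and, for $n=11$, dimension $9=n-2$ (outside the admissible range altogether); and your handle range $\{3,\ldots,n-12\}$ for the base bordism is empty for $n\leq 14$.

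The paper's proof bypasses surgery on $M_0$ entirely. It quotes a result of Streil \cite{streil:p12} that gives directly $\si(M_0)\geq \mu(\mH P^2\times \mR^{n-8})$ for any such bundle, with no bordism hypothesis on the base. One then applies \cite[Theorem~2.3]{ammann.dahl.humbert:p11} to $\mH P^2\times \mR^{n-8}$ (using $\mu(\mR^{n-8})=\mu(\mS^{n-8})$), exactly as in your product step, and computes $\bigl(\mu(\mH P^2)/(8a_8)\bigr)^8=\frac{3^6 2^{18}}{7^8 5^2}\pi^8$ to arrive at $\ula_n$. The essential point is that Streil's inequality compares $\si(M_0)$ directly to a conformal Yamabe constant of a model space, so the constants $\La_{n,k}$ never enter and the problematic comparison never arises.
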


\begin{proof}
M. Streil has shown in \cite{streil:p12} that $\si(M_0)\geq \mu(\mH
P^2\times \mR^{n-8})$, where $\mH P^2\times \mR^{n-8}$ carries 
the product metric of the standard metrics on $\mH P^2$ and
$\mR^{n-8}$. On the other hand it follows from 
\cite[Theorem~2.3]{ammann.dahl.humbert:p11}
that 
\[
\mu(\mH P^2\times \mR^{n-8})
\geq 
\frac{n a_n}{(8 a_8)^{8/n}((n-8)a_{n-8})^{(n-8)/n}} 
\mu(\mH P^2)^{8/n} \mu(\mS^{n-8})^{(n-8)/n}.
\]
On the other hand 
\[
\left(\frac{\mu(\mH P^2)}{8a_8}\right)^8
= \frac{3^6 2^{18}}{7^8 5^2}\pi^8
= 1.32599...\pi^8
= 12581.78...
\]
This clearly implies the proposition.
\end{proof}

As an example we study $n=11$. Then $\nu_3 = \pi^6/8$ and thus 
$\ul{\la}_{11} = 178.23277$. Some further values for 
$\ul{\la}_{n}$ are listed in Figure~\ref{fig.la.table}.

\begin{proposition}
Let $M$ be a $2$-connected compact manifold of dimension $n\geq 11$.
Then $\si(M)  =  0 $ if $\al(M)\neq 0$.
If $\al(M)= 0$, then
\[
 \si(M) \geq
   \begin{cases}
    \ula_{11}> 135.90 & \text{if } n=11, \\
    \ula_{12}> 158.72 & \text{if } n=12, \\
    \ul{\La}_{n,\geqtwo}  & \text{if } n\geq 13.
  \end{cases} 
\]
%
\end{proposition}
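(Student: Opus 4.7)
The plan is to split on the value of $\al(M)\in KO_n(\mathrm{pt})$. When $\al(M)\neq 0$, the Hitchin obstruction shows that the spin manifold $M$ admits no positive-scalar-curvature metric, hence $\si(M)\leq 0$; combined with Petean's theorem (every simply-connected compact manifold of dimension at least~$5$ satisfies $\si\geq 0$), this forces $\si(M)=0$.

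Now assume $\al(M)=0$. By Stolz's theorem together with the Extended Stolz proposition above, $M$ is spin-bordant to the total space $N_0$ of an $\mH P^2$-bundle with structure group $\PSp(3)$ over a simply-connected base $Q$ of dimension $n-8\geq 3$. Since $\mH P^2$ is $3$-connected, the homotopy exact sequence of the fibration gives $\pi_1(N_0)=0$ and $\pi_2(N_0)\cong \pi_2(Q)$, which in general need not vanish. To apply the proposition at the beginning of this section (on surgeries between $2$-connected spin-bordant manifolds), I would first kill $\pi_2(N_0)\cong H_2(N_0;\mZ)$, which is finitely generated by Hurewicz, through a finite sequence of $2$-surgeries directly on $N_0$. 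Each class is representable by an embedded $S^2$ via the Whitney embedding theorem, and such a sphere has trivial normal bundle because $N_0$ is simply-connected and spin with $\dim N_0\geq 5$; choosing framings compatible with the spin structure yields a $2$-connected spin manifold $N_0'$ spin-bordant to $N_0$. The surgery formula \eqref{eq.sigma.surg}, Corollary~\ref{maincor}, and the proposition above on $\mH P^2$-bundles then give
\[
\si(N_0')\geq \min\bigl(\si(N_0),\La_{n,2}\bigr)\geq \min\bigl(\ula_n,\ul{\La}_{n,2}\bigr).
\]

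Since $N_0'$ and $M$ are $2$-connected and spin-bordant, the proposition at the beginning of this section exhibits $M$ as the result of surgeries on $N_0'$ of dimensions $\ell\in\{3,\ldots,n-4\}$. Iterating \eqref{eq.sigma.surg} and Corollary~\ref{maincor} yields
\[
\si(M)\geq \min\bigl(\si(N_0'),\min_{3\leq k\leq n-4}\ul{\La}_{n,k}\bigr)\geq \min\bigl(\ula_n,\ul{\La}_{n,\geqtwo}\bigr).
\]
The three cases of the statement follow by comparing $\ula_n$ with $\ul{\La}_{n,\geqtwo}$ from their closed-form expressions (cf.~Figure~\ref{fig.la.table}): for $n\in\{11,12\}$ the smaller quantity is $\ula_n$, and a direct evaluation gives $\ula_{11}>135.9$ and $\ula_{12}>158.72$, while for $n\geq 13$ the opposite inequality holds and the bound is $\ul{\La}_{n,\geqtwo}$. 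The main hurdle I anticipate is the $2$-surgery step: the framings must be chosen so that the spin-bordism class is preserved, ensuring that the surgery formula transfers the lower bound from $N_0$ through to $N_0'$ cleanly in terms of $\ul{\La}_{n,2}$.
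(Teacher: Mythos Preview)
Your overall strategy matches the paper's: first establish $\si(M)\geq \min(\ul{\La}_{n,\geqtwo},\ula_n)$ by combining the Extended Stolz proposition, the bound $\si(N_0)\geq \ula_n$ for $\mH P^2$-bundles, and the surgery formula together with the proposition on surgeries between $2$-connected spin-bordant manifolds; then compare $\ula_n$ with $\ul{\La}_{n,\geqtwo}$. On the first half you are in fact more explicit than the paper, which simply writes ``we have seen $\si(M)\geq\min(\ul{\La}_{n,\geqtwo},\ula_n)$''. Your observation that $N_0$ need not be $2$-connected (since $\pi_2(N_0)\cong\pi_2(Q)$) and your remedy via $2$-surgeries on $N_0$ are correct and fill in a step the paper leaves implicit. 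Incidentally, your worry about framings is unnecessary: since $\pi_2(SO(n-2))=0$ the framing of an embedded $S^2$ is unique up to homotopy, and since $S^2$ carries only the bounding spin structure the spin structure extends automatically over the attached $3$-handle; so the spin-bordism class is preserved without any choice.

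Where your argument is genuinely incomplete is the comparison step for $n\geq 13$. You assert that $\ula_n\geq\ul{\La}_{n,\geqtwo}$ ``from their closed-form expressions (cf.\ Figure~\ref{fig.la.table})'', but that table stops at $n=18$, and the closed-form expressions involve $\Gamma$-functions whose ratio is not obviously monotone in the required direction. The paper handles this carefully: it checks $\ula_n\geq\ul{\La}_{n,\geqtwo}$ numerically for $13\leq n\leq 5000$, and for $n\geq 1100$ proves analytically that $\ula_n^{\,n}\geq 1.43\,\ul{\La}_{n,2}^{\,n}\geq 1.43\,\ul{\La}_{n,\geqtwo}^{\,n}$ using the elementary bound $\Gamma(n)/\Gamma(n-\tfrac12)\geq\sqrt{n-1}$. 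Without an argument of this kind, your proof does not cover $n\geq 19$.
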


\begin{proof}
If $\al(M)= 0$, then we have seen
\[ \si(M)  \geq \min(\ul{\La}_{n,\geqtwo}, \ul{\la}_n) .\]
It remains to compare $\ul{\La}_{n,\geqtwo}$ and $\ul{\la}_n$.
Numerically we calculated  $ \ul{\la}_{11}\leq \ul{\La}_{11,\geqtwo}$,  
$\ul{\la}_{12}\leq \ul{\La}_{12,\geqtwo}$, and 
$\ul{\la}_n \geq \ul{\La}_{n,\geqtwo}$ for $13\leq n\leq 5000$.

For $n\geq 1100$ we found
\[\ula_n^n\geq 1.43 \,\ul{\La}_{n,2}^n\geq 1.43 \,\ul{\La}_{n,\geqtwo}^n\]
by studying the $\Gamma$-function and by using
$\Gamma(n)/\Gamma(n-1/2)\geq \sqrt{n-1}$. This yields the required result.
\end{proof}

\begin{figure} 
\begin{tabular}{c|ccc}
$n$ & $Y(\mS^n)$ & $\ul{\La}_{n,\geqtwo}$ & $\ul{\la}_n$\\
\hline
7 & 113.5272754 & 74.50435 \\
8 & 130.7157953 & 92.24278367 \\
9 & 147.8778709 & 109.4260421 \\
10 & 165.0220642 & 126.4134026 \\
11 & 182.1536061 & 143.3280094 & 135.9033973\\
12 & 199.2758713 & 160.2189094 & 158.7256737\\
13 & 216.3911332 & 177.1071517 & 178.0562033\\
14 & 233.5009793 & 194.0019409 & 196.2714765\\
15 & 250.6065514 & 210.9071013 & 213.9967504\\
16 & 267.7086915 & 227.8239126 & 231.4689436\\
17 & 284.8080344 & 244.7524346 & 248.7967717\\
18 & 301.9050675 & 261.6921542 & 266.0365304\\
\end{tabular}
\caption{Some values for $\ul{\La}_{n,\geqtwo}$ and $\ul{\la}_n$.}
\label{fig.la.table}
\end{figure}


\providecommand{\bysame}{\leavevmode\hbox to3em{\hrulefill}\thinspace}
\providecommand{\MR}{\relax\ifhmode\unskip\space\fi MR }
\providecommand{\MRhref}[2]{%
  \href{http://www.ams.org/mathscinet-getitem?mr=#1}{#2}
}
\providecommand{\href}[2]{#2}

\end{document}